\renewenvironment{proof}[1][\proofname]{\par
  \normalfont
  \topsep6\p@\@plus6\p@ \trivlist
  \item[\hskip\labelsep{\bfseries #1}\@addpunct{\bfseries.}]\ignorespaces
}{
  \endtrivlist
}
\renewcommand{\proofname}{Proof}
\begin{document}
\newcommand{\field}[1]{\mathbb{#1}} 
\newcommand{\ind}{\mathbbm{1}}
\newcommand{\C}{\mathbb{C}}
\newcommand{\D}{\,\mathscr{D}}
\newcommand{\E}{\,\mathcal{E}}
\newcommand{\F}{\,\mathscr{F}}
\newcommand{\I}{\,\mathrm{i}}
\newcommand{\N}{\field{N}}
\newcommand{\T}{\,\mathrm{T}}
\newcommand{\Ls}{\,\mathscr{L}}
\newcommand{\ve}{\,\mathrm{vec}}
\newcommand{\var}{\,\mathrm{var}}
\newcommand{\cov}{\,\mathrm{Cov}}
\newcommand{\vech}{\,\mathrm{vech}}
\newcommand*\dif{\mathop{}\!\mathrm{d}}
\newcommand{\difs}{\mathrm{d}}
\newcommand\mi{\mathrm{i}}
\newcommand\me{\mathrm{e}}
\newcommand{\R}{\field{R}}

\def\poi{\textsc{Poisson}}
\newcommand{\es}{\hat{f}_n}
\newcommand{\ess}{\hat{f}_{\flr{ns}}}
\newcommand{\p}[1]{\frac{\partial}{\partial#1}}
\newcommand{\pp}[1]{\frac{\partial^2}{\partial#1\partial#1^{\top}}}
\newcommand{\para}{\theta}
\providecommand{\bv}{\mathbb{V}}
\providecommand{\bu}{\mathbb{U}}
\providecommand{\bt}{\mathbb{T}}
\newcommand{\flr}[1]{\lfloor#1\rfloor}
\newcommand{\ba}{B_m}
\newcommand{\bxi}{\bar{\xi}_n}
\newcommand{\sgn}{{\rm sgn \,}}
\newcommand{\rint}{\int^{\infty}_{-\infty}}

\newcommand{\skakko}[1]{\left(#1\right)}
\newcommand{\mkakko}[1]{\left\{#1\right\}}
\newcommand{\lkakko}[1]{\left[#1\right]}

\newcommand{\Pj}{\mathbb{P}}
\newcommand{\Z}{\field{Z}}
\newcommand{\Zo}{\field{Z}_0}

\newcommand{\abs}[1]{\lvert#1\rvert}
\newcommand{\ct}[1]{\langle#1\rangle}
\newcommand{\inp}[2]{\langle#1,#2\rangle}
\newcommand{\norm}[1]{\lVert#1 \rVert}
\newcommand{\Bnorm}[1]{\Bigl\lVert#1\Bigr  \rVert}
\newcommand{\Babs}[1]{\Bigl \lvert#1\Bigr \rvert} 
\newcommand{\ep}{\epsilon} 
\newcommand{\sumn}[1][i]{\sum_{#1 = 1}^T}
\newcommand{\tsum}[2][i]{\sum_{#1 = -#2}^{#2}}
\providecommand{\abs}[1]{\lvert#1\rvert}
\providecommand{\Babs}[1]{\Bigl \lvert#1\Bigr \rvert} 

\newcommand{\uint}{\int^{1}_{0}}
\newcommand{\freqint}{\int^{\pi}_{-\pi}}
\newcommand{\li}[1]{\mathfrak{L}(S_{#1})}

\newcommand{\cum}{{\rm cum}}

\newcommand{\xt}{\bm{X}_{t, T}}
\newcommand{\yt}{\bm{Y}_{t, T}}
\newcommand{\zt}{\bm{Z}_{t, T}}
\newcommand{\gcu}{{\rm GC}^{2 \to 1}(u)}

\newcommand{\btheta}{\bm{\theta}}
\newcommand{\bbeta}{\bm{\eta}}
\newcommand{\bzeta}{\bm{\zeta}}
\newcommand{\bzero}{\bm{0}}
\newcommand{\bI}{\bm{I}}
\newcommand{\bd}{\bm{d}}
\newcommand{\bx}[1]{\bm{X}_{#1, T}}
\newcommand{\be}{\bm{\ep}}
\newcommand{\bp}{\bm{\phi}}

\newcommand{\act}{A_T^{\circ}}
\newcommand{\ac}{A^{\circ}}

\newcommand{\dlim}{\xrightarrow{d}}
\newcommand{\plim}{\rightarrow_{P}}

\newcommand{\ls}{\mathcal{S}}
\newcommand{\cs}{\mathcal{C}}

\providecommand{\ttr}[1]{\textcolor{red}{ #1}}
\providecommand{\ttb}[1]{\textcolor{blue}{ #1}}
\providecommand{\ttg}[1]{\textcolor{green}{ #1}}
\providecommand{\tty}[1]{\textcolor{yellow}{ #1}}
\providecommand{\tto}[1]{\textcolor{orange}{ #1}}
\providecommand{\ttp}[1]{\textcolor{purple}{ #1}}

\newcommand{\sign}{\mathop{\rm sign}}
\newcommand{\conv}{\mathop{\rm conv}}
\newcommand{\argmax}{\mathop{\rm arg~max}\limits}
\newcommand{\argmin}{\mathop{\rm arg~min}\limits}
\newcommand{\argsup}{\mathop{\rm arg~sup}\limits}
\newcommand{\arginf}{\mathop{\rm arg~inf}\limits}
\newcommand{\diag}{\mathop{\rm diag}}
\newcommand{\minimize}{\mathop{\rm minimize}\limits}
\newcommand{\maximize}{\mathop{\rm maximize}\limits}
\newcommand{\tr}{\mathop{\rm tr}}
\newcommand{\Cum}{\mathop{\rm Cum}\nolimits}
\newcommand{\Var}{\mathop{\rm Var}\nolimits}
\newcommand{\Cov}{\mathop{\rm Cov}\nolimits}

\numberwithin{equation}{section}
\theoremstyle{plain}
\newtheorem{thm}{Theorem}[section]

\newtheorem{lem}[thm]{Lemma}
\newtheorem{prop}[thm]{Proposition}
\theoremstyle{definition}
\newtheorem{defi}{Definition}[section]
\newtheorem{assumption}[defi]{Assumption}
\newtheorem{cor}[thm]{Corollary}
\newtheorem{rem}[thm]{Remark}
\newtheorem{eg}{Example}

\title{Sparse principal component analysis for high-dimensional stationary time series}
\author[1]{Kou Fujimori}
\author[2]{Yuichi Goto}
\author[3]{Yan Liu}
\author[2]{Masanobu Taniguchi}
\affil[1]{Department of Economics,
Faculty of Economics and Law,
Shinshu University.}
\affil[2]{Department of Applied Mathematics,
Waseda University.}
\affil[3]{Institute for Mathematical Science,
Faculty of Science and Engineering,
Waseda University.}
\date{}
\maketitle
\begin{abstract}
We consider the sparse principal component analysis for high-dimensional stationary processes.
The standard principal component analysis performs poorly when the dimension of the process is large.
We establish the oracle inequalities for penalized principal component estimators for the processes including heavy-tailed time series.
The rate of convergence of the estimators is established.
We also elucidate the theoretical rate for choosing the tuning parameter in penalized estimators.
The performance of the sparse principal component analysis is demonstrated by numerical simulations.
The utility of the sparse principal component analysis for time series data is exemplified by the application to average temperature data.
\end{abstract}

\section{Introduction.}
The principal component analysis (PCA) has been a standard tool for multivariate data analysis.
It facilitates the understanding of the covariance matrix 
and
becomes a central method for dimension reduction and variable selection.
When the sample size is large, \cite{anderson1963} developed the asymptotic theory for principal component analysis.
A thorough investigation into the standard principal component analysis is summarized in \citet{jolliffe2002}.

The dimension $p$ of the contemporary data is often large, compared with the sample size.
\cite{johnstone2001} investigated the distribution of the largest eigenvalue when $p$ is large,
and introduced the concept of the spiked covariance matrix model.
The sparse principal component analysis, combined with variable selection techniques such as Lasso (\cite{tibshirani1996} or elastic net \cite{zh2005}), 
was introduced in \cite{zou2006}.
\cite{sh2008} considered the sparse principal component analysis via regularized low-rank matrix approximation.
\cite{johnstone2009} provided a simple algorithm for selecting a subset of coordinates with the largest sample variances with consistency even when the dimension $p$ is large.
\cite{amini2009} proposed two computational methods for recovering the support set of the leading eigenvector
in the spiked covariance model.
\cite{cai2012} derived the optimal rates of convergence for sparse covariance matrix estimation.
\cite{pj2012} proposed an augmented sparse PCA method and showed that the procedure attains near optimal rate of convergence.
\cite{birnbaum2013} studied the problem of estimating the leading eigenvector under the $l_2$-loss for independent high-dimensional Gaussian observations.
\cite{cai2013} considered both minimax and adaptive estimation of the principal subspace in the high dimensional setting.
\cite{vu2013minimax} also considered the sparse principal subspace estimation problems and established the 
optimal bounds for row subspace and 
nearly optimal for column subspace.
\cite{berthet2013} derived a minimax optimality in a finite sample analysis for sparse principal components 
of a high-dimensional covariance matrix.
In view of computation of the sparse principal 
component analysis, the several work has been 
appeared.
For example, 
\cite{ma2013} proposed the iterative thresholding method for estimation of the leading eigenvector, 
and \cite{wang2016statistical} studied the computationally efficient algorithm to estimate the principal subspace.
\cite{van2016estimation} formalized the theoretical development for sparse PCA on some local set,
which is induced to ensure the compatibility condition (See also, e.g. \cite{buhlmann2011}).
Notably, much of the above theory was developed under the setting of i.i.d.~observations.

The principal component analysis applied to dependent data has also been studied for a long time.
\cite{zhao1986} proposed a new procedure for detection of signals based on eigenvalues of covariance matrix.
\cite{tk1987} derived the asymptotic distributions of eigenvalues of the sample covariance matrix from Gaussian stationary processes.
However, all these developments are restricted to the case when the dimension $p$ is finite,
i.e., the PCA for multivariate stationary processes.
More details of analyses for multivariate stationary processes can be found in \cite{taniguchikakizawa2000}.
The limiting distribution of sample covariance matrix for large-dimensional linear models was derived in \cite{jin2009}.
The Mar\v{c}enko-Pastur theorem for time series was obtained in an explicit way in \cite{yao2012note}.
The high-dimensional covariance estimation for dependent data with some regularization techniques was introduced in \cite{pourahmadi2013}.
The theoretical development for regularized estimation in sparse high-dimensional time series models
was considered in \cite{basu2015regularized}.
They showed that a restricted eigenvalue condition holds with high probability.
Motivated by this work,
\cite{wong2020lasso} established the consistency of Lasso for some sparse non-Gaussian and nonlinear time series.

In this paper, we consider the sparse principal component analysis for high-dimensional stationary time series.
Especially, we established the oracle inequalities for the Lasso-type PCA estimator for both
$\alpha$-mixing Gaussian process and $\beta$-mixing sub-Weibull process.
In addition, we also derived the oracle inequality for $l_0$-penalized estimators for comparison.
The finite sample performance is illustrated by some numerical simulations.

\subsection{Notation.}
For a vector $\bm{v} \in \mathbb{R}^p$, we defined the $l_r$-norm 
$\|\bm{v}\|_r$ as
$\|\bm{v}\|_r = \left(
\sum_{i=1}^p |v_i|^r
\right)^{1/r}$ for $r \in (0, \infty)$.
Also, let $\|\bm{v}\|_0$ and $\|\bm{v}\|_\infty$ be
$\|\bm{v}\|_0
= \sum_{i=1}^p \mathbbm{1}_{\{|v_i| >0\}}$ and 
$\|\bm{v}\|_\infty
= \max_{1 \leq i \leq p} |v_i|$, respectively.

For a $p \times p$ matrix $A$, the operator norm $\|A\|_r$ is defined as
\[
\|A\|_r
:= \sup_{\|v\|_r=1} \|A \bm{v}\|_r,\quad
r \in (0, \infty].
\]
Moreover, the ``max" norm of the matrix $A$ is
$\|A\|_{\max}
:= \max_{i,j} |A_{ij}|$.
For a vector $\bm{v} \in \mathbb{R}^p$, and an index set $T\subset \{1,2,\ldots,p\}$, 
we denote by $\bm{v}_T$ the $|T|$-dimensional sub-vector of $\bm{v}$ restricted by the index set $T$,
where $|T|$ is the number of elements of the set $T$. 

The rest of the paper is organized as follows.
In Section \ref{sec:2}, we provide the fundamental settings for the sparse principal component analysis for stationary processes.
Theoretical results of the Lasso-type principal component analysis for Gaussian processes and heavy-tailed processes are discussed in Sections \ref{sec:3} and \ref{sec:4}, respectively.
In addition, the $l_0$-penalized estimation is discussed in Section \ref{sec:5}.
Section \ref{sec:6} gives several simulation results to demonstrate the finite sample performance of sparse principal component analyses.
The rigorous proofs and technical results are relegated to Section \ref{sec:7}.

\section{Preliminaries.} \label{sec:2}
\subsection{Model setups.}\label{subsec:2.1}
Let $\{\bm{X}_t\}_{t \in \mathbb{Z}}$
be an $\mathbb{R}^p$-valued, strictly stationary, and centered time series on a probability space $(\Omega, \mathcal{F}, P)$.
Suppose the observation stretch $(\bm{X}_1,\ldots,\bm{X}_n),\ n \in \mathbb{N}$, is available.
Consider the following $p \times p$ matrices
\[
\Sigma_0 = E[\bm{X}_t \bm{X}_t^\top],
\qquad \qquad
\hat{\Sigma}_n
= \frac{1}{n}\sum_{t=1}^n \bm{X}_t \bm{X}_t^{\top}.
\]
Let $\bm{q}^0$ be the first principal component  corresponding to the largest eigenvalue 
$\phi_{\max}^2 := \Lambda_{\max}(\Sigma_0)$ of $\Sigma_0$, so that $\bm{q}^0$ is normalized as $\|\bm{q}^0\|_2=1$.
The parameter of interest is
\[
\bm{\beta}^0
= \phi_{\max} \bm{q}^0,
\]
which is a solution to the following optimization 
problem
\[
\bm{\beta}^0
= \arg \min_{\bm{\beta} \in \mathbb{R}^p} \frac{1}{4}
\|
\Sigma_0 - \bm{\beta} \bm{\beta}^\top
\|_F^2,
\]
where $\|\cdot\|_F$ is the Frobenius norm.
In other words, it holds that
\[
\Sigma_0 \bm{\beta}^0 = \phi_{\max}^2 \bm{\beta}^0 =  \|\bm{\beta}^0\|_2^2 \bm{\beta}^0.
\]

Our primary interest is the sparse principal component estimation.
Let $S$ be $S = \{j : \bm{\beta}^0_j \not=0\}$.
Specifically, $\bm{\beta}^0$ is supposed to be \emph{$s_0$-sparse}, $i.e.,$ 
$|S| = s_0$.
A typical motivating example is given as follows.

\begin{eg}
Consider the stationary process $\{\bm{X}_t\}$ taking the form of VAR model, i.e.,
\begin{equation} \label{eq:2.1}
\bm{X}_t = A \bm{X}_{t - 1} + \bm{\epsilon}_t, \qquad \bm{\epsilon_t} \sim {\rm i.i.d.}(\bm{0}, I_p),
\end{equation}
where $A$ is a $p \times p$ deterministic matrix with the decomposition
\[
A = \sum_{j = 1}^p \nu_j \bm{p}_j \bm{p}_j^{\top}
\]
such that $1 > \nu_1 \geq \cdots \geq \nu_p \geq 0$ are the eigenvalues of $A$, 
with $\bm{p}_j$ the associated eigenvectors.
Suppose the eigenvector $\bm{p}_1$ is $s_0$-sparse.
By the holomorphic functional calculus, we have
\[
\Sigma_0 = \sum_{j = 1}^p \frac{1}{1 - \nu_j^2} \bm{p}_j \bm{p}_j^{\top},
\]
which shows that the first principal component of $\Sigma_0$ is also $s_0$-sparse.
\end{eg}

In this paper, we consider the following penalized PCA estimators. 
In such a high-dimensional setting, 
the sparse estimation is essential for
variable selection, which facilitates the 
interpretation of features in the dataset.
\begin{defi}\label{Lasso estimator}
The following estimator for $\bm{\beta}^0$ is defined as
\[
\hat{\bm{\beta}}_n
:= \arg \min_{\beta \in \mathcal{B}}
\left\{
\frac{1}{4} \|\hat{\Sigma}_n - \bm{\beta} \bm{\beta}^\top\|_F^2 + \lambda \, {\rm pen}(\bm{\beta})
\right\},\quad
\mathcal{B}:= \{\bm{\beta}: \|\bm{\beta}-\bm{\beta}^0\|_2 \leq \eta\},
\]
where $\lambda \geq 0$ is a tuning parameter, ${\rm pen}(\cdot)$ is some penalty function,
and $\eta >0$
is a suitable constant. 
The following estimators are focused on in this paper.
The $l_1$-penalized estimator $\hat{\bm{\beta}}_n^1$ is defined as
\begin{equation}\label{lasso def}
\hat{\bm{\beta}}_n^1
:= \arg \min_{\beta \in \mathcal{B}}
\left\{
\frac{1}{4} \|\hat{\Sigma}_n - \bm{\beta} \bm{\beta}^\top\|_F^2 + \lambda_1 \|\bm{\beta}\|_1 
\right\},
\end{equation}
where $\lambda = \lambda_1$ and ${\rm pen}(\bm{\beta}) = \|\bm{\beta}\|_1 $.
The estimator $\hat{\bm{\beta}}_n^1$ is also referred to as the \emph{Lasso-type estimator}.
The $l_0$-penalized estimator $\hat{\bm{\beta}}_n^0$ is defined as
\begin{equation}\label{l0 def}
\hat{\bm{\beta}}_n^0
:= \arg \min_{\beta \in \mathcal{B}}
\left\{
\frac{1}{4} \|\hat{\Sigma}_n - \bm{\beta} \bm{\beta}^\top\|_F^2 + \lambda_0 \|\bm{\beta}\|_0
\right\},
\end{equation}
where $\lambda = \lambda_0$ and ${\rm pen}(\bm{\beta}) = \|\bm{\beta}\|_0$.
\end{defi}

We establish the error bound of the estimator $\hat{\bm{\beta}}_n$ for high-dimensional time series.
In the following, we list the definition of mixing coefficients for stochastic processes.
\begin{defi}\label{mixing coefficient}
For a stationary process $\{\bm{X}_t\}_{t \in \mathbb{Z}}$, we define the following 
quantities.
\begin{itemize}
\item[{\rm (i)}]
The $\alpha$-mixing coefficients for $\{\bm{X}_t\}_{t \in \mathbb{Z}}$ is defined as
\begin{eqnarray*}
\alpha(l) 
&:=& \sup\{
|P(A \cap B)-P(A)P(B)| : \\
&&A \in \sigma(\bm{X}_s,\ s \leq t),\ 
B \in \sigma(\bm{X}_s,\ s \geq t+l)
\ \mbox{for all}\ t \in \mathbb{Z}
\},\quad
l \in \mathbb{Z}.
\end{eqnarray*}
The process $\{\bm{X}_t\}_{t \in \mathbb{Z}}$ is called \emph{$\alpha$-mixing} if 
$\alpha(l) \to 0$ as $l \to \infty$.
\item[{\rm (ii)}]
The $\rho$-mixing coefficients for 
$\{\bm{X}_t\}_{t \in \mathbb{Z}}$ is defined as
\begin{eqnarray*}
\rho(l) 
&:=& \sup\{
\Cov(f(\bm{X}_t), g(\bm{X}_{t + l})) : \\
&& E[f]=E[g]=0,\ E[f^2]=E[g^2]=1
\},\quad
l \in \mathbb{Z}.
\end{eqnarray*}
The process $\{\bm{X}_t\}_{t \in \mathbb{Z}}$ is called  \emph{$\rho$-mixing} if 
$\rho(l) \to 0$ as $l \to \infty$.
\item[{\rm (iii)}]
The $\beta$-mixing coefficients for $\{\bm{X}_t\}_{t \in \mathbb{Z}}$ is defined by
\begin{eqnarray*}
\beta(l) 
&:=& \sup
\frac{1}{2}\sum_{i=1}^I \sum_{j=1}^J|P(A \cap B)-P(A)P(B)|,\quad
l \in \mathbb{Z},
\end{eqnarray*}
where the supremum is over all pair of
partitions 
$\{A_i\}_{1 \leq i \leq I} \subset \sigma(\bm{X}_s,\ s \leq t)$
and $\{B_j\}_{1 \leq j \leq J} \subset \sigma(\bm{X}_s,\ s \geq t+l)$.
The process $\{\bm{X}_t\}_{t \in \mathbb{Z}}$ is called \emph{$\beta$-mixing} if 
$\beta(l) \to 0$ as $l \to \infty$.
\end{itemize}
\end{defi}
All these conditions are known as the weak dependence conditions (e.g. \cite{tikhomirov1981}),
under which the convergence rate in the central limit theorem for weakly dependent random variables is evaluated.

Now consider the spectral decomposition 
of $\Sigma_0$ as
\[
\Sigma_0 = U \Phi_0 U^\top,
\]
where $\Phi_0^2 := \diag(\phi_1^2,\ldots,\phi_p^2)$ with $\phi_1 \geq \ldots \geq \phi_p \geq 0$
is a diagonal matrix constructed by the 
eigenvalues of $\Sigma_0$, and 
$U= (\bm{u}_1,\ldots,\bm{u}_p)$ satisfies that $U U^\top= U^\top U = I_p$.
Actually, we have $\phi_{\max} = \phi_1$ and 
$\bm{q}^0 = \bm{u}_1$.
Hereafter, we assume the following 
conditions.
\begin{assumption}\label{model setups}
Suppose that the dimension $p: = p(n)$ of the process $\{\bm{X}_t\}$ satisfies
$\log p/n = o(1)$
as $n \to \infty$.
There exists a constant $\sigma> 3 \eta > 0$ such that
\[
\phi_{\max} \geq \phi_j+\sigma,\quad
j \geq 2.
\]
\end{assumption}
The second condition separates the largest eigenvalue from other eigenvalues.

\subsection{Risk functions.}\label{subsec:2.2}
We define the theoretical risk $R(\cdot)$ 
and empirical risk $R_n(\cdot)$ and their derivatives with respect to $\bm{\beta}$ as follows:
\[
R(\bm{\beta})
:= - \frac{1}{2} \bm{\beta}^\top \Sigma_0 \bm{\beta} + \frac{1}{4} \|\bm{\beta}\|_2^4,\quad
R_n(\bm{\beta})
:= - \frac{1}{2} \bm{\beta}^\top \hat{\Sigma}_n \bm{\beta} + \frac{1}{4} \|\bm{\beta}\|_2^4,
\]
\[
\dot{R}(\bm{\beta})
= - \Sigma_0 \bm{\beta} + \|\bm{\beta}\|_2^2 \bm{\beta},\quad
\dot{R}_n(\bm{\beta})
= - \hat{\Sigma}_n \bm{\beta} + \|\bm{\beta}\|_2^2 \bm{\beta},
\]
and
\[
\ddot{R}(\bm{\beta})
:= -\Sigma_0 + \|\bm{\beta}\|_2^2 I_p + 2 \bm{\beta} \bm{\beta}^\top,\quad
\ddot{R}_n(\bm{\beta})
:= -\hat{\Sigma}_n + \|\bm{\beta}\|_2^2 I_p + 2 \bm{\beta} \bm{\beta}^\top.
\]
Then, it holds that 
\[
\bm{\beta}^0 = \arg \min_{\bm{\beta} \in \R^p} R(\bm{\beta}),\quad
\hat{\bm{\beta}}_n
= \arg \min_{\bm{\beta} \in \mathcal{B}} \left\{
R_n(\bm{\beta}) + \lambda \, {\rm pen}(\bm{\beta})
\right\}.
\]
For brevity, we denote $W_n = \hat{\Sigma}_n - \Sigma_0$.
Note that 
\[
\dot{R}_n(\bm{\beta})-\dot{R}(\bm{\beta})
= -W_n\bm{\beta}
\]
and that 
\[
\ddot{R}_n(\bm{\beta})-\ddot{R}(\bm{\beta})
= -W_n.
\]
The following result holds for the risk function.
\begin{prop}[Lemma 12.7, \cite{van2016estimation}]\label{theoretical risk convex}
Under Assumption \ref{model setups}, 
$\ddot{R}(\bm{\beta})$ is positive definite on $\bm{\beta} \in \mathcal{B}$.
\end{prop}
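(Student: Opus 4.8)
The plan is to establish the conclusion directly: I would show that the quadratic form $\bm{v}^\top \ddot{R}(\bm{\beta}) \bm{v}$ is strictly positive for every unit vector $\bm{v}$ and every $\bm{\beta} \in \mathcal{B}$. Writing out the Hessian, this amounts to proving
\[
Q(\bm{v},\bm{\beta}) := \|\bm{\beta}\|_2^2 - \bm{v}^\top \Sigma_0 \bm{v} + 2(\bm{\beta}^\top \bm{v})^2 > 0.
\]
First I would record what membership in $\mathcal{B}$ buys us. Since $\bm{\beta}^0 = \phi_{\max}\bm{u}_1$ with $\|\bm{\beta}^0\|_2 = \phi_{\max}$, the constraint $\|\bm{\beta}-\bm{\beta}^0\|_2 \leq \eta$ yields $\|\bm{\beta}\|_2 \geq \phi_{\max}-\eta$ and, setting $c_1 := \bm{u}_1^\top\bm{v}$, the bound $|\bm{\beta}^\top\bm{v} - \phi_{\max} c_1| = |(\bm{\beta}-\bm{\beta}^0)^\top\bm{v}| \leq \eta$. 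I would also note that Assumption \ref{model setups} gives $\phi_{\max} \geq \sigma$ (take $j=2$ and use $\phi_2 \geq 0$) and the eigengap estimate $\phi_{\max}^2 - \phi_2^2 = (\phi_{\max}-\phi_2)(\phi_{\max}+\phi_2) \geq \sigma\,\phi_{\max} > 3\eta\,\phi_{\max}$.

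Next I would bound the three terms of $Q$ separately. Decomposing $\bm{v} = c_1\bm{u}_1 + \bm{w}$ with $\bm{w}\perp\bm{u}_1$ and using the spectral decomposition $\Sigma_0 = U\Phi_0 U^\top$, the restriction of $\Sigma_0$ to $\bm{u}_1^\perp$ has top eigenvalue $\phi_2^2$, so $\bm{v}^\top\Sigma_0\bm{v} \leq \phi_{\max}^2 c_1^2 + \phi_2^2(1-c_1^2)$. The reverse triangle inequality gives $|\bm{\beta}^\top\bm{v}| \geq \max\{0,\ \phi_{\max}|c_1|-\eta\}$, hence $(\bm{\beta}^\top\bm{v})^2 \geq (\max\{0,\ \phi_{\max}|c_1|-\eta\})^2$. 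Combining these with $\|\bm{\beta}\|_2^2 \geq (\phi_{\max}-\eta)^2$ reduces the whole problem to showing that the single-variable function
\[
g(x) := (\phi_{\max}-\eta)^2 - \phi_{\max}^2 x^2 - \phi_2^2(1-x^2) + 2\big(\max\{0,\ \phi_{\max} x-\eta\}\big)^2
\]
is strictly positive for all $x = |c_1| \in [0,1]$.

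Finally I would minimize $g$ by splitting into the regimes $\phi_{\max} x \leq \eta$ and $\phi_{\max} x > \eta$. On each piece $g$ is a quadratic in $x$ with nonnegative leading coefficient, so its infimum is attained either at an endpoint or at the explicit vertex; in every case the minimum value simplifies to an expression dominated below by $\phi_{\max}^2 - \phi_2^2 - 2\phi_{\max}\eta - \eta^2$, which is positive precisely because $\phi_{\max}^2 - \phi_2^2 > 3\eta\phi_{\max} \geq 2\phi_{\max}\eta + \eta^2$ (the last step using $\phi_{\max} > \eta$). I expect the main obstacle to be the intermediate regime $\phi_{\max} x > \eta$ (the vertex case): there $\bm{v}$ has a substantial component along $\bm{u}_1$, so the destabilizing term $-\bm{v}^\top\Sigma_0\bm{v}$ is as negative as $-\phi_{\max}^2 c_1^2$ and must be compensated by $2(\bm{\beta}^\top\bm{v})^2$, which is large in that direction only because $\bm{\beta}$ is forced to lie near $\phi_{\max}\bm{u}_1$. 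Tracking this cancellation is exactly what consumes the separation margin, and it is where the quantitative hypothesis $\sigma > 3\eta$ (rather than merely $\sigma > \eta$) is genuinely needed.
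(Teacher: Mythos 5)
Your argument is correct, and it takes a genuinely different route from the paper, which in fact offers no proof at all: Proposition \ref{theoretical risk convex} is imported verbatim as Lemma 12.7 of \cite{van2016estimation}. Your self-contained argument---write $\bm{v}^\top\ddot{R}(\bm{\beta})\bm{v} = \|\bm{\beta}\|_2^2 - \bm{v}^\top\Sigma_0\bm{v} + 2(\bm{\beta}^\top\bm{v})^2$, decompose $\bm{v}$ along $\bm{u}_1$ and its orthogonal complement, and reduce to a one-variable minimization in $x=|c_1|$---is sound: your three estimates ($\|\bm{\beta}\|_2 \geq \phi_{\max}-\eta$, $|\bm{\beta}^\top\bm{v}| \geq \phi_{\max}|c_1|-\eta$ when the right side is positive, and $\bm{v}^\top\Sigma_0\bm{v} \leq \phi_{\max}^2 c_1^2 + \phi_2^2(1-c_1^2)$) hold simultaneously, and the case analysis closes, since the vertex value on $\{\phi_{\max}x>\eta\}$ is at least $\phi_{\max}^2-\phi_2^2-2\phi_{\max}\eta-\eta^2 \geq \sigma\phi_{\max}-2\phi_{\max}\eta-\eta^2 \geq \eta(\phi_{\max}-\eta)>0$, using $\phi_{\max}^2-\phi_2^2 \geq \sigma\phi_{\max}$ and $\phi_{\max}\geq\sigma>3\eta$. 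Your closing remark about where $\sigma>3\eta$ is consumed is also apt: the cruder perturbation bound $\Lambda_{\min}(\ddot{R}(\bm{\beta})) \geq \Lambda_{\min}(\ddot{R}(\bm{\beta}^0)) - 3\eta(2\phi_{\max}+\eta)$ can fail under $\sigma>3\eta$ alone, so keeping the rank-one term $2\bm{\beta}\bm{\beta}^\top$ aligned with $\bm{u}_1$, as you do, is essential. Two caveats: (i) a cosmetic slip---on the piece $\phi_{\max}x\leq\eta$ the leading coefficient of $g$ is $-(\phi_{\max}^2-\phi_2^2)<0$, so that piece is concave and its minimum sits at the endpoint $x=\eta/\phi_{\max}$; your parenthetical ``endpoint or vertex'' covers this, but the claim of a nonnegative leading coefficient is wrong there; (ii) more substantively for the paper's architecture, the proposition asserts only positive definiteness, which you prove, but the downstream arguments (the first display in the proof of Lemma \ref{empirical risk convex} and inequality \eqref{two point margin}) invoke the specific quantitative form $\bm{v}^\top\ddot{R}(\bm{\beta})\bm{v} \geq 2(\sigma-3\eta)$ taken from van de Geer's lemma. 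Your explicit constant $\phi_{\max}(\sigma-2\eta)-\eta^2$ is positive but neither dominates nor is dominated by $2(\sigma-3\eta)$ in general (it is smaller when, e.g., $\phi_{\max}=\sigma=0.5$, $\phi_2=0$, $\eta=0.15$, and larger when $\phi_{\max}$ is large), so if your proof were to replace the citation, the later margins would have to be restated with your constant or your minimization sharpened to recover the cited form.
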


By Proposition \ref{theoretical risk convex},
the theoretical risk function $R(\cdot)$ is shown to be strictly convex on $\mathcal{B}$.
This motivates us to consider the penalized PCA estimators as an optimization problem.
We always assume that the bound $\eta$ of $\mathcal{B}$ satisfies $\eta < \sigma/3$ later on in this paper.

\section{Lasso-type estimator for $\alpha$-mixing Gaussian process.}\label{sec:3}
We first deal with the Lasso-type estimator 
$\hat{\bm{\beta}}_n^1$ for time series 
satisfying the following condition.
\begin{assumption}\label{Gaussian}
The process $\{\bm{X}_t\}_{t \in \mathbb{Z}}$ is a zero mean and 
$\alpha$-mixing Gaussian stationary process.
\end{assumption}


An $\alpha$-mixing Gaussian process is also $\rho$-mixing. 
Let $\tau_n$ be a sequence such that
\[
\tau_n := \frac{\sigma-3 \eta}{\phi_{\max} \sum_{l=0}^n \rho(l)},
\]
and $\zeta_n$ a sequence such that 
\[
\zeta_n := \sqrt{\frac{2(b+1)\log p}{\tilde{c}n}},
\]
where $b>0$ is a free parameter and 
$\tilde{c}>0$ is a constant.
The sequence $\tau_n$ is used to 
ensure the convexity of the empirical risk,
and $\zeta_n$ is used to control
the error bound $\|W_n\|_{\max}$.
The oracle inequality is established as follows.
\begin{thm}\label{oracle ineq Lasso}
Suppose Assumptions \ref{model setups} and \ref{Gaussian} hold.
For every $n$ satisfying that 
$\log p /n \leq 1$, 
$0 < \xi_n < \tau_n$,
and $\lambda_1$ satisfying that
\[
\lambda_1 > \biggl(\frac{C+1}{C-1}\phi_{\max} \|\bm{\beta}^0\|_1 \biggr) \biggl(\zeta_n \sum_{l=0}^n \rho(l)\biggr)
\]
with some universal constant $C > 1$,
it holds that 
\begin{multline*}
P\left(
\|\hat{\bm{\beta}}_n^1-\bm{\beta}^0\|_1
\leq \frac{2(C+1)^2  s_0 \lambda_1}{\sigma-3\eta- \phi_{\max} \, \xi_n \sum_{l=0}^n \rho(l)}
\right) \\
\geq 1-\exp(-2 b \log p) - 2 \exp\bigl(-\tilde{c}n\min(\xi_n, \xi_n^2)\bigr).
\end{multline*}
\end{thm}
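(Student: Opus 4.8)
The plan is to combine a deterministic penalized-M-estimation argument for the quartic PCA risk with two dependence-aware concentration bounds. Write $\bm{\delta} := \hat{\bm{\beta}}_n^1 - \bm{\beta}^0$. Since $\bm{\beta}^0 \in \mathcal{B}$ and $\hat{\bm{\beta}}_n^1$ minimizes the penalized empirical criterion over the convex set $\mathcal{B}$, I would start from the basic inequality $R_n(\hat{\bm{\beta}}_n^1) + \lambda_1\|\hat{\bm{\beta}}_n^1\|_1 \le R_n(\bm{\beta}^0) + \lambda_1\|\bm{\beta}^0\|_1$. The key algebraic simplification is that $\dot{R}(\bm{\beta}^0)=0$, because the eigen-equation $\Sigma_0\bm{\beta}^0 = \|\bm{\beta}^0\|_2^2\bm{\beta}^0$ gives $\dot{R}(\bm{\beta}^0) = -\Sigma_0\bm{\beta}^0 + \|\bm{\beta}^0\|_2^2\bm{\beta}^0 = 0$, whence $\dot{R}_n(\bm{\beta}^0) = \dot{R}(\bm{\beta}^0) - W_n\bm{\beta}^0 = -W_n\bm{\beta}^0$. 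A second-order Taylor expansion of $R_n$ about $\bm{\beta}^0$ along the segment to $\hat{\bm{\beta}}_n^1$ (which remains in $\mathcal{B}$ by convexity) then yields $R_n(\hat{\bm{\beta}}_n^1) - R_n(\bm{\beta}^0) = -(W_n\bm{\beta}^0)^\top\bm{\delta} + \tfrac12\bm{\delta}^\top\ddot{R}_n(\tilde{\bm{\beta}})\bm{\delta}$ for some $\tilde{\bm{\beta}}\in\mathcal{B}$, producing the master inequality $\tfrac12\bm{\delta}^\top\ddot{R}_n(\tilde{\bm{\beta}})\bm{\delta} \le (W_n\bm{\beta}^0)^\top\bm{\delta} + \lambda_1(\|\bm{\beta}^0\|_1 - \|\hat{\bm{\beta}}_n^1\|_1)$.

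Next I would fix the two good events. On $\mathcal{A}_1 := \{\|W_n\bm{\beta}^0\|_\infty \le \phi_{\max}\|\bm{\beta}^0\|_1 \zeta_n\sum_{l=0}^n\rho(l)\}$ the stated lower bound on $\lambda_1$ forces $\|W_n\bm{\beta}^0\|_\infty \le \tfrac{C-1}{C+1}\lambda_1$, so the noise term is dominated by the penalty. Bounding $(W_n\bm{\beta}^0)^\top\bm{\delta} \le \|W_n\bm{\beta}^0\|_\infty\|\bm{\delta}\|_1$, splitting $\|\hat{\bm{\beta}}_n^1\|_1$ and $\|\bm{\beta}^0\|_1$ over the support $S$ (so that $\|\bm{\beta}^0\|_1-\|\hat{\bm{\beta}}_n^1\|_1 \le \|\bm{\delta}_S\|_1 - \|\bm{\delta}_{S^c}\|_1$), and using $\bm{\delta}^\top\ddot{R}_n(\tilde{\bm{\beta}})\bm{\delta}\ge 0$, the master inequality collapses to the cone condition $\|\bm{\delta}_{S^c}\|_1 \le C\|\bm{\delta}_S\|_1$, hence $\|\bm{\delta}\|_1 \le (1+C)\|\bm{\delta}_S\|_1 \le (1+C)\sqrt{s_0}\,\|\bm{\delta}\|_2$.

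To close the bound I would invoke the margin. On $\mathcal{A}_2 := \{\|W_n\|_2 \le \phi_{\max}\xi_n\sum_{l=0}^n\rho(l)\}$, Proposition \ref{theoretical risk convex} (quantified under Assumption \ref{model setups}) gives $\bm{\delta}^\top\ddot{R}(\tilde{\bm{\beta}})\bm{\delta}\ge(\sigma-3\eta)\|\bm{\delta}\|_2^2$, and since $\ddot{R}_n(\tilde{\bm{\beta}}) = \ddot{R}(\tilde{\bm{\beta}}) - W_n$ and $\bm{\delta}^\top W_n\bm{\delta}\le\|W_n\|_2\|\bm{\delta}\|_2^2$, the empirical margin $\bm{\delta}^\top\ddot{R}_n(\tilde{\bm{\beta}})\bm{\delta} \ge \kappa_n\|\bm{\delta}\|_2^2$ holds with $\kappa_n := \sigma-3\eta-\phi_{\max}\xi_n\sum_{l=0}^n\rho(l) > 0$ (positivity of $\kappa_n$ is exactly the content of $\xi_n<\tau_n$). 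Feeding the margin and the cone estimate into the master inequality gives $\tfrac12\kappa_n\|\bm{\delta}\|_2^2 \le \tfrac{2C}{C+1}\lambda_1\|\bm{\delta}_S\|_1 \le \tfrac{2C}{C+1}\lambda_1\sqrt{s_0}\,\|\bm{\delta}\|_2$, so that $\|\bm{\delta}\|_2 \le \tfrac{4C}{(C+1)\kappa_n}\lambda_1\sqrt{s_0}$ and therefore $\|\bm{\delta}\|_1 \le (1+C)\sqrt{s_0}\,\|\bm{\delta}\|_2 \le \tfrac{2(C+1)^2 s_0\lambda_1}{\kappa_n}$, the asserted inequality (the constant loosened from $4C$ to $2(C+1)^2$). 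Thus on $\mathcal{A}_1\cap\mathcal{A}_2$ the conclusion holds.

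It remains to show $P(\mathcal{A}_1^c)\le\exp(-2b\log p)$ and $P(\mathcal{A}_2^c)\le 2\exp(-\tilde{c}n\min(\xi_n,\xi_n^2))$, and this probabilistic step is where I expect the main obstacle to lie. Since an $\alpha$-mixing Gaussian process is $\rho$-mixing, each entry of $W_n$ is a normalized partial sum of products $X_{t,i}X_{t,j} - E[X_{t,i}X_{t,j}]$ of jointly Gaussian, $\rho$-mixing variables, which are sub-exponential; the crux is a Bernstein-type inequality for $\rho$-mixing sequences in which the long-run-variance proxy is of order $\sum_{l=0}^n\rho(l)$, which is precisely where that factor enters both thresholds. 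For $\mathcal{A}_1$ I would concentrate the coordinates of $W_n\bm{\beta}^0$ with threshold of order $\zeta_n$ (using $\log p/n\le 1$ to stay in the Gaussian regime $\min(\zeta_n,\zeta_n^2)=\zeta_n^2$), so that $\zeta_n^2 = 2(b+1)\log p/(\tilde{c}n)$ gives a per-term tail $p^{-2(b+1)}$ and a union bound over the $O(p^2)$ entries yields $p^{-2b}$. For $\mathcal{A}_2$ I would apply the analogous operator-norm (bilinear-form) concentration for the dependent sample covariance with deviation parameter $\xi_n$, which produces exactly the $\min(\xi_n,\xi_n^2)$ sub-exponential tail. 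The delicate and genuinely hard points are the correct variance bookkeeping through the mixing coefficients and, above all, controlling the quadratic perturbation $\bm{\delta}^\top W_n\bm{\delta}$ strongly enough (via $\|W_n\|_2$) to preserve empirical convexity under only $\log p/n = o(1)$; the remaining optimization steps are essentially deterministic.
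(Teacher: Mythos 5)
Your deterministic skeleton is essentially the paper's argument in a mildly different dress, and it is correct: the paper starts from van de Geer's two-point inequality $-\dot{R}_n(\hat{\bm{\beta}}_n^1)^\top(\bm{\beta}^0-\hat{\bm{\beta}}_n^1)\le\lambda_1\|\bm{\beta}^0\|_1-\lambda_1\|\hat{\bm{\beta}}_n^1\|_1$ together with the two-point margin from Proposition \ref{theoretical risk convex}, whereas you start from the basic inequality plus a Lagrange--Taylor expansion of $R_n$ at $\bm{\beta}^0$ using $\dot{R}(\bm{\beta}^0)=\bm{0}$; both routes produce the same noise decomposition (a linear term $(W_n\bm{\beta}^0)^\top\bm{\delta}$ controlled by $\|W_n\|_{\max}$ via Lemma \ref{W ineq}, and a quadratic perturbation $\bm{\delta}^\top W_n\bm{\delta}$), the same cone condition $\|\bm{\delta}_{S^c}\|_1\le C\|\bm{\delta}_S\|_1$ under the same threshold on $\lambda_1$, and in fact your constant $4C$ is sharper than the stated $2(C+1)^2$. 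Your bookkeeping for the event $\mathcal{A}_1$, including the union bound over $p^2$ entries giving the $\exp(-2b\log p)$ term, matches the paper's Lemma \ref{W ineq}.

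The genuine gap is your event $\mathcal{A}_2=\{\|W_n\|_2\le\phi_{\max}\xi_n\sum_{l=0}^n\rho(l)\}$: this event cannot have probability $1-2\exp\bigl(-\tilde{c}n\min(\xi_n,\xi_n^2)\bigr)$ in the regime the theorem covers. The Hanson--Wright bound (Lemma \ref{Hanson-Wright}, deployed as in Lemma \ref{empirical risk convex}) controls $|\bm{v}^\top W_n\bm{v}|$ for each \emph{fixed} unit vector $\bm{v}$; upgrading this to the operator norm requires a net of $\mathbb{S}^{p-1}$ of cardinality exponential in $p$, which inflates the tail to order $\exp\bigl(cp-\tilde{c}n\min(\xi_n,\xi_n^2)\bigr)$ --- vacuous once $p\gtrsim n$, which is exactly the setting $\log p/n=o(1)$ permits. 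Indeed, for Gaussian data $\|W_n\|_2$ is typically of order $\sqrt{p/n}$ and need not be small at all when $p\gg n$. The paper sidesteps this by defining the second event only for the single normalized error direction $\bm{v}=(\hat{\bm{\beta}}_n^1-\bm{\beta}^0)/\|\hat{\bm{\beta}}_n^1-\bm{\beta}^0\|_2$ and invoking Lemma \ref{empirical risk convex} for that one quadratic form --- which is all your master inequality actually consumes, since you only ever need $\bm{\delta}^\top W_n\bm{\delta}\le\eta_n\|\bm{\delta}\|_2^2$ (and the nonnegativity of $\bm{\delta}^\top\ddot{R}_n(\tilde{\bm{\beta}})\bm{\delta}$ used in your cone step is evaluated at this same direction). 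So the repair is to replace $\mathcal{A}_2$ by $\{|\bm{v}^\top W_n\bm{v}|\le\phi_{\max}\xi_n\sum_{l=0}^n\rho(l)\}$ for that specific $\bm{v}$. Your closing remark correctly senses that this is the delicate point --- note, though, that even the paper's version applies a fixed-vector concentration bound to a data-dependent direction, a looseness your operator-norm formulation would have cured only at the unaffordable price above.
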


Noting that $b >0$ is a free parameter, we can 
take it arbitrary.
The larger $b$ implies the smaller probability in the right-hand-side of the oracle inequality, 
on the other hand for such situation, the sample size needs to be larger to ensure that the error bound of the estimator is small.
By a straightforward computation, we have
\[
\frac{\sigma-3\eta- \phi_{\max} \, \xi_n \sum_{l=0}^n \rho(l)}{\sum_{l=0}^n \rho(l)}
=
\phi_{\max} (\tau_n - \xi_n) > 0,
\]
on the domain $0 < \xi_n < \tau_n$, 
which shows the upper bound for $\|\hat{\bm{\beta}}_n^1-\bm{\beta}^0\|_1$ in parentheses is well defined.
Let $\gamma_n$ be
\begin{equation} \label{eq:3.1l}
\gamma_n =
\phi_{\max}\biggl(\zeta_n \sum_{l=0}^n \rho(l)\biggr).
\end{equation}
Since $C>1$ is a constant, the main part of the 
error bound is the order of $s_0 \lambda_1$, where $\lambda_1$ is the tuning parameter in $l_1$-penalized estimator. 
Note that Theorem \ref{oracle ineq Lasso} holds for
$\lambda_1$ larger than $\gamma_n\|\bm{\beta}^0\|_1$
up to some constant multiplication,
where $\gamma_n$ is the bound for
$\|W_n\|_{\max}=\|\hat{\Sigma}_n-\Sigma_0\|_{\max}$.
Since $\gamma_n$ depends on the mixing coefficients, the tuning parameter should be 
chosen by considering the dependence of the process.
The probability for the oracle inequality mainly depends on $p$ and $\xi_n$,
since $b$ can be taken arbitrarily. 
The sequence $\xi_n$ also depends on the mixing coefficients,
since it is bounded by $\tau_n$.
Thus, the oracle inequality of the estimator is obtained in terms of dependence residing in time series.
\begin{rem}
For a simple interpretation, we can summarize 
Theorem \ref{oracle ineq Lasso} as follows.
Let $\delta_n >0$ be a decreasing sequence,
$A_n$ and $B_n$ be sequences defined by
\[
A_n := \biggl(\frac{C+1}{C-1}\phi_{\max} \|\bm{\beta}^0\|_1 \biggr) \biggl(\zeta_n \sum_{l=0}^n \rho(l)\biggr)\quad \mbox{and} \quad
B_n := \frac{2(C+1)^2}{\sigma-3\eta- \phi_{\max} \, \xi_n \sum_{l=0}^n \rho(l)},
\]
respectively.
If $\lambda_1 > A_n$, then it holds that 
$\|\hat{\bm{\beta}}_n^1 - \bm{\beta}^0\|_1 \leq B_n s_0 \lambda_1$ with probability at least $1 - \delta_n$.
\end{rem}
Note that $\|\bm{\beta}^0\|_1$ appeared in 
the lower bound of $\lambda_1$ in Theorem 
\ref{oracle ineq Lasso} is evaluated as follows:
\[
\|\bm{\beta}^0\|_1 \leq \sqrt{s_0} \|\bm{\beta}^0\|_2 = \sqrt{s_0} \phi_{\max}.
\]
Under some additional conditions, 
we can derive the rate of convergence of the 
estimator as follows.
\begin{cor}\label{rate of convergence Lasso}
Suppose that 
$\sum_{l=0}^\infty \rho(l) < \infty$,
$\log p \to \infty$ as $n \to \infty$,
and that
\[
\lambda_1 \asymp \sqrt{s_0} \phi_{\max}^2\sqrt{\frac{\log p}{n}}.
\]
Then, under the same assumptions as 
Theorem \ref{oracle ineq Lasso}, 
the following hold true.
\[
\|\hat{\bm{\beta}}_n^1 - \bm{\beta}^0\|_1
= O_p\left(
s_0^{3/2} \phi_{\max}^2\sqrt{\frac{\log p}{n}}
\right),
\] 
and
\[
\|\hat{\bm{\beta}}_n^1 - \bm{\beta}^0\|_2^2
= O_p\left(
s_0^3 \phi_{\max}^4\frac{\log p}{n}
\right),\quad
\]
as $n \to \infty$.
\end{cor}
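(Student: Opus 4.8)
The plan is to read off both rates directly from the oracle inequality of Theorem \ref{oracle ineq Lasso} by making concrete choices of $\lambda_1$ and $\xi_n$, and then to deduce the $l_2$ bound from the $l_1$ bound. The starting observation is that the summability hypothesis $\sum_{l=0}^\infty \rho(l) < \infty$ makes the partial sums $\sum_{l=0}^n \rho(l)$ uniformly bounded by a constant $\bar\rho$. Consequently $\zeta_n \sum_{l=0}^n \rho(l) \asymp \sqrt{\log p / n}$, and the quantity $\gamma_n$ of \eqref{eq:3.1l} satisfies $\gamma_n \asymp \phi_{\max}\sqrt{\log p/n}$; this fixes the scale of all the random fluctuations.

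First I would check that the prescribed $\lambda_1$ meets the lower bound required by Theorem \ref{oracle ineq Lasso}. Using the inequality $\|\bm{\beta}^0\|_1 \le \sqrt{s_0}\,\phi_{\max}$ recorded above, the threshold
\[
A_n = \Bigl(\tfrac{C+1}{C-1}\phi_{\max}\|\bm{\beta}^0\|_1\Bigr)\Bigl(\zeta_n \sum_{l=0}^n \rho(l)\Bigr)
\]
is of order at most $\sqrt{s_0}\,\phi_{\max}^2\sqrt{\log p/n}$, so choosing $\lambda_1 \asymp \sqrt{s_0}\,\phi_{\max}^2\sqrt{\log p/n}$ with a sufficiently large leading constant guarantees $\lambda_1 > A_n$ for all large $n$. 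Next I would pin down $\xi_n$ by taking $\xi_n = \tau_n/2$, which automatically lies in $(0,\tau_n)$. The key simplification is the identity $\phi_{\max}\tau_n \sum_{l=0}^n\rho(l) = \sigma - 3\eta$ built into the definition of $\tau_n$; it gives $\phi_{\max}\xi_n\sum_{l=0}^n\rho(l) = \tfrac12(\sigma-3\eta)$, so the denominator of the oracle bound equals $\tfrac12(\sigma-3\eta)$ and the prefactor collapses to the constant $4(C+1)^2/(\sigma-3\eta)$. On the event described in the theorem we then have $\|\hat{\bm{\beta}}_n^1-\bm{\beta}^0\|_1 \le B_n s_0 \lambda_1 = O\bigl(s_0^{3/2}\phi_{\max}^2\sqrt{\log p/n}\bigr)$.

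It remains to verify that this event has probability tending to one. The term $\exp(-2b\log p) = p^{-2b}$ vanishes because $\log p \to \infty$ and $b>0$, while $2\exp(-\tilde c\, n\min(\xi_n,\xi_n^2)) \to 0$ provided $n\min(\xi_n,\xi_n^2)\to\infty$; with $\xi_n = \tau_n/2 \asymp 1/(\phi_{\max}\bar\rho)$ this is where the argument really bites, since it forces a mild compatibility between $\phi_{\max}$ and $n$ (automatically satisfied whenever the stated rate itself is informative, i.e.\ when $\phi_{\max}^4\log p = o(n)$). Granting this, the displayed $l_1$ rate holds with probability tending to one, so $\|\hat{\bm{\beta}}_n^1-\bm{\beta}^0\|_1 = O_p(s_0^{3/2}\phi_{\max}^2\sqrt{\log p/n})$. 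The $l_2$ bound is then immediate from the elementary inequality $\|\bm{v}\|_2 \le \|\bm{v}\|_1$: squaring the $l_1$ rate yields $\|\hat{\bm{\beta}}_n^1-\bm{\beta}^0\|_2^2 = O_p(s_0^3\phi_{\max}^4 \log p/n)$. The main obstacle is thus not the $l_2$ passage, which is a deliberately crude norm comparison that nonetheless reproduces the claimed rate, but the bookkeeping in the third step, namely choosing $\xi_n$ so that convexity (through $\tau_n$), the constancy of the prefactor $B_n$, and the vanishing of the concentration tail hold simultaneously.
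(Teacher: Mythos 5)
Your derivation is correct and is exactly the argument the paper intends: the corollary is stated without proof as an immediate consequence of Theorem \ref{oracle ineq Lasso}, and your choices (a sufficiently large constant in $\lambda_1 \asymp \sqrt{s_0}\,\phi_{\max}^2\sqrt{\log p/n}$ via $\|\bm{\beta}^0\|_1 \le \sqrt{s_0}\,\phi_{\max}$, the choice $\xi_n = \tau_n/2$ making the denominator the constant $(\sigma-3\eta)/2$, and the crude bound $\|\cdot\|_2 \le \|\cdot\|_1$, which reproduces the paper's stated $s_0^3\phi_{\max}^4\log p/n$ rate) fill in precisely the omitted bookkeeping. Your observation that the tail term requires $n\min(\xi_n,\xi_n^2)\to\infty$, i.e.\ a mild implicit growth restriction on $\phi_{\max}$ since $\xi_n \asymp 1/\phi_{\max}$, is a fair point that the paper leaves tacit.
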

Note that the dimension $p$ is assumed that
$\log p/ n =o(1)$ as $n \to \infty$, which allows that $p \gg n$.
\if0
\begin{rem}\label{consistency Lasso Gauss}
Under the assumption that 
\[
s_0^3 \phi_{\max}^4 = o\left(
\frac{n}{\log p}
\right),
\]
the estimator $\hat{\bm{\beta}}_n^1$ is consistent
with respect to both $l_1$ and $l_2$ norms.
\end{rem}
\fi
\section{Lasso-type estimator for $\beta$-mixing sub-Weibull process.} \label{sec:4}
Next, we consider the 
Lasso-type estimator $\hat{\bm{\beta}}_n^1$
for stationary processes with heavy tails.
\begin{defi}[Sub-Weibull random variables]
Let $\gamma >0$. The sub-Weibull $(\gamma)$ is defined as follows.
\begin{itemize}
\item[{\rm (i)}]
An $\mathbb{R}$-valued random variable 
$X$ is called the sub-Weibull $(\gamma)$
if it satisfies that there exists a constant $K>0$ such that
\[
\left(
E[|X|^q]
\right)^{\frac{1}{q}}
\leq K q^{\frac{1}{\gamma}}
,\quad
\forall q \geq 1 \land \gamma. 
\]
The sub-Weibull $(\gamma)$-norm
$\|\cdot\|_{\psi_\gamma}$ is defined for 
sub-Weibull $(\gamma)$ random variable 
$X$ as follows:
\[
\|X\|_{\psi_\gamma}
:= \sup_{q \geq 1} \left(
E[|X|^q]
\right)^{\frac{1}{q}} q^{- \frac{1}{\gamma}}.
\] 
\item[{\rm (ii)}]
The $\mathbb{R}^p$-valued random variable $\bm{X}$ is called
the sub-Weibull $(\gamma)$ if it satisfies that 
for every $j = 1,\ldots,p$, the $j$-th component $X_j$ of $\bm{X}$ is 
sub-Weibull $(\gamma)$.
Then, the sub-Weibull $(\gamma)$-norm
$\|\cdot\|_{\psi_\gamma}$ is defined for 
sub-Weibull $(\gamma)$ random variable 
$\bm{X}$ as follows:
\[
\|\bm{X}\|_{\psi_\gamma}
:= \sup_{v \in \mathbb{S}^{p-1}}
\|\bm{v}^\top \bm{X}\|_{\psi_{\gamma}},
\]
where $\mathbb{S}^{p-1}$ is the unit sphere
on $\mathbb{R}^p$.
\end{itemize}
\end{defi}
The sub-Gaussian random variables is sub-Weibull $(2)$; 
the sub-exponential random variable is sub-Weibull $(1)$.
Note that, for $\gamma<1$, sub-Weibull
$(\gamma)$ random variables have heavier
tail than sub-exponential and sub-Gaussian random variables.

We consider the stationary process $\{\bm{X}_t\}_{t \in \mathbb{Z}}$ satisfying the following conditions.
\begin{assumption}\label{sub-Weibull process}
\begin{itemize}
\item[(i)]
The process $\{\bm{X}_t\}_{t \in \mathbb{Z}}$ is geometrically $\beta$-mixing, $i.e.$, there exist constants $c$, 
$\gamma_1 >0$ such that 
\[
\beta(n) \leq 2 \exp\left(-c n^{\gamma_1}\right),\quad \forall n \in \mathbb{N}.
\]
\item[(ii)]
For a constant $\gamma_2>0$,
the process $\{\bm{X}_t\}_{t \in \mathbb{Z}}$ is sub-Weibull $(\gamma_2)$,
that is, 
there exists a constant $K>0$ such that,
\[
\|\bm{X}_t\|_{\psi_{\gamma_2}}
\leq K,\qquad
\forall t \in \Z.
\]
\item[(iii)]
It holds that
\[
\left(
\frac{1}{\gamma_1} + \frac{2}{\gamma_2}
\right)^{-1} < 1,
\]
where $\gamma_1$ and $\gamma_2$
are defined in (i) and (ii), respectively.
\item[(iv)]
It holds that 
\[
\frac{\log p}{n^\gamma} = o(1),\quad
n \to \infty,
\]
where 
\[
\gamma := \left(
\frac{1}{\gamma_1} + \frac{2}{\gamma_2}
\right)^{-1}.
\]
\end{itemize}
\end{assumption}

For the constant $K$ in Assumption \ref{sub-Weibull process} (ii), we defined another constant $K_2$ as
\[
K_2 := 2^{2/\gamma_2} K^2.
\]
Introducing $K_2$ can reduce terms in the oracle inequality in the following.
As a remark, for any $\bm{X}_t$, we have
\[
\|X_{tj}^2\|_{\psi_{\gamma_2}} \leq K_2, \qquad j = 1, \dots, p.
\]

Let $\tilde{b}$ be some positive constant.
Let $\tau_{1n}$ and $\tau_{2n}$ be sequences such that 
\[
\tau_{1n} := \frac{K_2 C_1^{1/\gamma}(\log n)^{1/\gamma}}{n},
\]
and
\[
\tau_{2n} := \max\left\{
\frac{K_2 C_1^{1/\gamma}(\log n p^2 + 2 \tilde{b} \log p)^{1/\gamma}}{n},
K_2 \sqrt{\frac{2 C_2 (\tilde{b} + 1)\log p}{n}}
\right\},
\]
respectively.
The sequence $\{\tau_{1n}\}$ ensures the convexity of the empirical risk,
and the sequence $\{\tau_{2n}\}$ controls the error bound for $\|W_n\|_{\max}$,
respectively.
The oracle inequality is obtained as follows.
\begin{thm}\label{oracle ineq sub-Weibull}
Suppose Assumptions \ref{model setups} and \ref{sub-Weibull process} hold.
Let $\zeta_n > \tau_{2n}$.
For $n >4$, $\tau_{1n}<\xi_n < \sigma-3 \eta$,
and $\lambda_1$ satisfying that
\[
\lambda_1 > \frac{C+1}{C-1}\zeta_n \|\bm{\beta}^0\|_1,
\]
where $C \geq 1$ is a universal constant,
it holds that 
\begin{multline*}
P\left(
\|\hat{\bm{\beta}}_n^1-\bm{\beta}^0\|_1
\leq \frac{2(C+1)^2  s_0 \lambda_1}{\sigma-3\eta-\xi_n}
\right) \\
\geq 
1- 2 \exp(-\tilde{b} \log p^2)
-2n \exp\left(-\frac{(\xi_n n)^\gamma}{K_2^\gamma C_1}\right)
- 2 \exp\left(
-\frac{\xi_n^2 n}{K_2^2 C_2}
\right),
\end{multline*}
where $\tilde{b}>0$ is a free parameter.
\end{thm}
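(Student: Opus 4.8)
The plan is to run the standard penalized $M$-estimation argument, now driven by two high-probability events tailored to the dependent, heavy-tailed setting: a \emph{curvature event} on which the empirical Hessian stays positive definite on $\mathcal{B}$ with effective lower curvature $\sigma-3\eta-\xi_n$, and a \emph{noise event} on which $\|W_n\|_{\max}\le\zeta_n$. First I would write the basic inequality coming from the definition \eqref{lasso def} of $\hat{\bm{\beta}}_n^1$,
\[
R_n(\hat{\bm{\beta}}_n^1)+\lambda_1\|\hat{\bm{\beta}}_n^1\|_1\le R_n(\bm{\beta}^0)+\lambda_1\|\bm{\beta}^0\|_1,
\]
and Taylor-expand $R_n$ to second order about $\bm{\beta}^0$. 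Writing $\bm{v}=\hat{\bm{\beta}}_n^1-\bm{\beta}^0$ and using $\dot{R}_n(\bm{\beta}^0)-\dot{R}(\bm{\beta}^0)=-W_n\bm{\beta}^0$ together with $\dot{R}(\bm{\beta}^0)=0$, the linear term is $(W_n\bm{\beta}^0)^\top\bm{v}$, which on the noise event obeys $|(W_n\bm{\beta}^0)^\top\bm{v}|\le\|W_n\|_{\max}\|\bm{\beta}^0\|_1\|\bm{v}\|_1\le\zeta_n\|\bm{\beta}^0\|_1\|\bm{v}\|_1$.

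Next I would convert the quadratic remainder into a restricted-eigenvalue statement. Since $\ddot{R}_n(\bm{\beta})-\ddot{R}(\bm{\beta})=-W_n$ and $\ddot{R}(\bm{\beta})$ is positive definite on $\mathcal{B}$ with smallest-eigenvalue lower bound $\sigma-3\eta$ implicit in Proposition \ref{theoretical risk convex}, on the curvature event the mean-value point $\tilde{\bm{\beta}}\in\mathcal{B}$ satisfies $\bm{v}^\top\ddot{R}_n(\tilde{\bm{\beta}})\bm{v}\ge(\sigma-3\eta-\xi_n)\|\bm{v}\|_2^2$. Feeding this bound, the linear bound, and the split $\|\bm{\beta}^0\|_1-\|\hat{\bm{\beta}}_n^1\|_1\le\|\bm{v}_S\|_1-\|\bm{v}_{S^c}\|_1$ into the basic inequality, and using $\lambda_1>\frac{C+1}{C-1}\zeta_n\|\bm{\beta}^0\|_1$ to absorb the linear term, produces the cone inequality $\|\bm{v}_{S^c}\|_1\le C\|\bm{v}_S\|_1$; on this cone $\|\bm{v}\|_1\le(C+1)\|\bm{v}_S\|_1\le(C+1)\sqrt{s_0}\|\bm{v}\|_2$. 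Combining with the curvature lower bound gives $\|\bm{v}\|_2\lesssim\sqrt{s_0}\lambda_1/(\sigma-3\eta-\xi_n)$ and hence the stated $\ell_1$ bound $\|\bm{v}\|_1\le 2(C+1)^2 s_0\lambda_1/(\sigma-3\eta-\xi_n)$ after collecting constants.

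It remains to establish the two events, which is where dependence and heavy tails enter. The key tool is a Bernstein-type inequality for geometrically $\beta$-mixing sub-Weibull sums. For the entries of $W_n$ the summands are the centered products $X_{ti}X_{tj}-E[X_{ti}X_{tj}]$, which are sub-Weibull $(\gamma_2/2)$ with norm controlled by $K_2=2^{2/\gamma_2}K^2$; applying the inequality to one entry and taking a union bound over the $p^2$ coordinates yields $P(\|W_n\|_{\max}>\zeta_n)\le 2\exp(-\tilde{b}\log p^2)$ whenever $\zeta_n>\tau_{2n}$, the two pieces of $\tau_{2n}$ being exactly the sub-Gaussian regime $K_2\sqrt{2C_2(\tilde b+1)\log p/n}$ and the heavier sub-Weibull regime $K_2 C_1^{1/\gamma}(\log np^2+2\tilde b\log p)^{1/\gamma}/n$, with $\gamma=(1/\gamma_1+2/\gamma_2)^{-1}$ combining the mixing decay and the tail index. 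The same inequality, applied to the scalar deviation governing the curvature loss of $\ddot{R}_n$ relative to $\ddot{R}$ and calibrated at level $\xi_n>\tau_{1n}$, supplies the remaining two terms $2n\exp(-(\xi_n n)^\gamma/(K_2^\gamma C_1))$ and $2\exp(-\xi_n^2 n/(K_2^2 C_2))$; the prefactor $n$ is the characteristic price of the sub-Weibull regime in such inequalities.

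The hard part will be the concentration step, namely making the Bernstein inequality for $\beta$-mixing sub-Weibull sequences precise with the exact constants $C_1$, $C_2$ and exponent $\gamma$ that appear in $\tau_{1n}$ and $\tau_{2n}$. This requires (i) verifying that a product of two sub-Weibull $(\gamma_2)$ variables is sub-Weibull $(\gamma_2/2)$ with norm at most $K_2$, (ii) handling the temporal dependence by a blocking or coupling argument that reduces the $\beta$-mixing sum to nearly independent blocks while paying the geometric cost $\beta(n)\le 2\exp(-cn^{\gamma_1})$, and (iii) recombining the block-level sub-Weibull tails so that the two-regime bound with the combined exponent $\gamma=(1/\gamma_1+2/\gamma_2)^{-1}<1$ (finite by Assumption \ref{sub-Weibull process}(iii)) emerges. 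Once this inequality is in hand, the deterministic optimization argument of the first two paragraphs is routine.
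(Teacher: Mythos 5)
Your proposal is correct and takes essentially the same route as the paper's proof: the paper works on exactly your two events, $\{\|W_n\|_{\max}\le\zeta_n\}\cap\{|\bm{v}^\top W_n\bm{v}|\le\xi_n\}$ with $\bm{v}$ the normalized error direction, runs the same split of the empirical-process term into a quadratic form in $\Delta$ plus the linear term bounded by $\zeta_n\|\bm{\beta}^0\|_1\|\Delta\|_1$, the same cone argument $\|\Delta\|_1\le(C+1)\sqrt{s_0}\|\Delta\|_2$, and the same Bernstein inequality for geometrically $\beta$-mixing sub-Weibull sums (Lemmas 7.4--7.6, citing Merlev\`ede--Peligrad--Rio rather than re-deriving it by blocking as you propose), with your sub-Weibull$(\gamma_2/2)$ product bound $K_2=2^{2/\gamma_2}K^2$ and the two-regime union bound behind $\tau_{1n}$ and $\tau_{2n}$ matching the paper exactly (the paper handles the entries of $W_n$ by polarization of $\bm{Y}_i^\top\bm{Y}_j$, equivalent to your direct product argument, and uses the two-point inequality in place of your basic inequality plus mean-value Taylor expansion, which yields the identical intermediate display). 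One bookkeeping note: on $\mathcal{B}$ the theoretical Hessian satisfies $\bm{v}^\top\ddot{R}(\bm{\beta})\bm{v}\ge 2(\sigma-3\eta)$ for unit $\bm{v}$, not $\sigma-3\eta$ as you state, and it is this factor of $2$ that cancels the $\tfrac12$ in your Taylor remainder so that the constant comes out as the stated $2(C+1)^2$ rather than $4(C+1)^2$.
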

As well as Gaussian case, the parameter $\tilde{b}>0$ is a free parameter.
Therefore, the trade off between the 
sample size and the probability can be observed. 
The condition $\tau_{1n} < \xi_n < \sigma-3\eta$ ensures that $\sigma-3\eta-\xi_n>0$.
Since $C>1$ is a universal constant, the main part of the 
error bound is still the order of $s_0 \lambda_1$, 
where $\lambda_1$ is the tuning parameter in $l_1$-penalized estimator. 
Theorem \ref{oracle ineq sub-Weibull} holds for
$\lambda_1$ larger than $\zeta_n\|\bm{\beta}^0\|_1$
up to some constant multiplication,
where $\zeta_n$ is the bound for 
$\|W_n\|_{\max}=\|\hat{\Sigma}_n-\Sigma_0\|_{\max}$.
Interestingly, 
the error bound and the tail probability do not depend on 
the mixing coefficients,
since we assume the geometric $\beta$-mixing condition here
for the process.
However, it is clear that 
the decay of the probability 
is slower than that for Gaussian case.
\begin{rem}
We can summarize
Theorem \ref{oracle ineq sub-Weibull} as follows.
Let $\delta_n >0$ be a decreasing sequence,
$A_n$ and $B_n$ be sequences defined by
\[
A_n := \frac{C+1}{C-1}\zeta_n \|\bm{\beta}^0\|_1 \quad \mbox{and} \quad
B_n := \frac{2(C+1)^2}{\sigma-3\eta-\xi_n},
\]
respectively.
If $\lambda_1 > A_n$, then it holds that 
$\|\hat{\bm{\beta}}_n^1 - \bm{\beta}^0\|_1 \leq B_n s_0 \lambda_1$ with probability at least $1 - \delta_n$.
\end{rem}

The rate of convergence of the estimator 
$\hat{\bm{\beta}}_n^1$ is 
established under some additional conditions.
\begin{cor}\label{rate of convergence sub-Weibull}
Suppose that the same assumptions as Theorem \ref{oracle ineq sub-Weibull} hold.
Assume moreover that 
\[
\xi_n \asymp \frac{(\log n)^{1/\gamma}}{n^{1-\alpha}},\quad
\alpha \in (0, 1/2),
\]
and
\[
\lambda_1  \asymp \sqrt{s_0} \phi_{\max} \sqrt{\frac{\log p}{n}}.
\]
Then, the following hold true. 
\[
\|\hat{\bm{\beta}}_n^1 - \bm{\beta}^0\|_1
= O_p\left(
s_0^{3/2} \phi_{\max} \sqrt{\frac{\log p}{n}}
\right),
\]
and
\[
\|\hat{\bm{\beta}}_n^1 - \bm{\beta}^0\|_2^2
= O_p\left(
s_0^3 \phi_{\max}^2 \frac{\log p}{n}
\right),\quad
\]
as $n \to \infty$.
\end{cor}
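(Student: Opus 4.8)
The plan is to read the rate off the oracle inequality of Theorem \ref{oracle ineq sub-Weibull} by inserting the prescribed order of $\lambda_1$ into the deterministic bound and checking that the associated event has probability tending to one. The first task is to confirm that the stated choices are admissible hypotheses of Theorem \ref{oracle ineq sub-Weibull} for large $n$. Since $\xi_n$ tends to $0$, the upper constraint $\xi_n < \sigma - 3\eta$ holds eventually; and since $\tau_{1n} \asymp (\log n)^{1/\gamma}/n$ is of strictly smaller order than the prescribed $\xi_n$ (because the denominator of $\xi_n$ is $o(n)$), the lower constraint $\tau_{1n} < \xi_n$ also holds eventually. For the penalty level I would take $\zeta_n \asymp \sqrt{\log p/n}$ and then invoke the bound $\|\bm{\beta}^0\|_1 \le \sqrt{s_0}\,\phi_{\max}$ recorded before Corollary \ref{rate of convergence Lasso}, so that the requirement $\lambda_1 > \frac{C+1}{C-1}\zeta_n\|\bm{\beta}^0\|_1$ is met by $\lambda_1 \asymp \sqrt{s_0}\,\phi_{\max}\sqrt{\log p/n}$ for a suitable constant.

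The second task is to justify that $\zeta_n$ can genuinely be taken of order $\sqrt{\log p/n}$ while still satisfying the admissibility condition $\zeta_n > \tau_{2n}$. This amounts to showing that the sub-Gaussian branch $K_2\sqrt{2C_2(\tilde b+1)\log p/n}$ dominates the sub-Weibull branch $K_2 C_1^{1/\gamma}(\log np^2 + 2\tilde b\log p)^{1/\gamma}/n$ in the definition of $\tau_{2n}$, so that $\tau_{2n} \asymp \sqrt{\log p/n}$. Under the dimension-growth condition of Assumption \ref{sub-Weibull process}(iv) the heavy-tailed branch is of smaller order, and this is exactly the step where the $\beta$-mixing and sub-Weibull structure could otherwise degrade the rate; I expect verifying this branch comparison to be the main obstacle, since it is what forces $\lambda_1$ to attain the Gaussian-type order $\sqrt{\log p/n}$ rather than a slower one.

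The third task is to show that the three tail terms in Theorem \ref{oracle ineq sub-Weibull} vanish. In the high-dimensional regime $\log p \to \infty$, the term $2\exp(-\tilde b\log p^2) = 2p^{-2\tilde b}$ tends to $0$ because $\tilde b > 0$. For $2n\exp(-(\xi_n n)^\gamma/(K_2^\gamma C_1))$, the choice of $\xi_n$ makes $(\xi_n n)^\gamma$ grow faster than $\log n$, so the exponential overwhelms the factor $n$ and the term vanishes. The remaining term $2\exp(-\xi_n^2 n/(K_2^2 C_2))$ is controlled precisely when $\xi_n^2 n \to \infty$; this lower constraint on the decay of $\xi_n$, together with the requirement that $\sigma - 3\eta - \xi_n$ stay bounded away from $0$, pins down the admissible window for $\xi_n$, and the order prescribed in the statement is calibrated to sit in this window. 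With all three terms vanishing, the event inside the probability holds with probability tending to one.

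Finally, on that event the denominator satisfies $\sigma - 3\eta - \xi_n \to \sigma - 3\eta > 0$, so the error bound collapses to a fixed constant times $s_0\lambda_1 \asymp s_0^{3/2}\,\phi_{\max}\sqrt{\log p/n}$, which yields the claimed $l_1$ rate in the $O_p$ sense. The $l_2$ rate then follows with no extra probabilistic work: since $\|\hat{\bm{\beta}}_n^1 - \bm{\beta}^0\|_2 \le \|\hat{\bm{\beta}}_n^1 - \bm{\beta}^0\|_1$, squaring the $l_1$ bound gives $\|\hat{\bm{\beta}}_n^1 - \bm{\beta}^0\|_2^2 = O_p(s_0^3\,\phi_{\max}^2\,\log p/n)$, completing the proof.
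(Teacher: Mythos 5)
Your overall strategy --- plugging the prescribed orders into the oracle inequality of Theorem \ref{oracle ineq sub-Weibull}, checking its hypotheses are eventually satisfied, showing the three tail terms vanish, and deducing the $l_2$ rate from $\|\cdot\|_2 \leq \|\cdot\|_1$ --- is exactly how the paper intends the corollary to follow (it is stated without a separate proof, and your squaring step reproduces the stated $s_0^3$ rate). However, two of your verification claims fail.

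First, and most seriously, the third tail term does not vanish under the stated window for $\xi_n$. With $\xi_n \asymp (\log n)^{1/\gamma}/n^{1-\alpha}$ and $\alpha \in (0,1/2)$ one has $\xi_n^2 n \asymp (\log n)^{2/\gamma} n^{2\alpha - 1} \to 0$, since $2\alpha - 1 < 0$; hence $2\exp(-\xi_n^2 n/(K_2^2 C_2)) \to 2$ and the probability lower bound in Theorem \ref{oracle ineq sub-Weibull} becomes vacuous. Your sentence asserting that ``the order prescribed in the statement is calibrated to sit in this window'' is precisely where the argument breaks: the window you correctly identify ($\xi_n \to 0$ together with $\xi_n^2 n \to \infty$) requires $\alpha \in (1/2, 1)$, not $\alpha \in (0,1/2)$. (The same slip appears in the paper's remark following Lemma \ref{empirical risk convex sub-Weibull}, so this is evidently a typo in the statement; a correct proof must catch and repair it rather than reproduce it. With $\alpha \in (1/2,1)$ the second term still vanishes because $(\xi_n n)^\gamma \asymp \log n \cdot n^{\alpha\gamma} \gg \log n$, one has $\sigma - 3\eta - \xi_n \to \sigma - 3\eta > 0$, and the rest of your argument goes through verbatim.)

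Second, your branch comparison for $\tau_{2n}$ claims more than Assumption \ref{sub-Weibull process}(iv) delivers. Domination of the sub-Weibull branch, i.e.\ $(\log np^2 + 2\tilde{b}\log p)^{1/\gamma}/n \lesssim \sqrt{\log p/n}$, is equivalent (when $\log p \gtrsim \log n$) to $\log p = O\bigl(n^{\gamma/(2-\gamma)}\bigr)$, which is strictly stronger than (iv)'s $\log p = o(n^\gamma)$ because $\gamma < 1$. For instance, with $\gamma = 1/2$ and $\log p \asymp n^{0.45}$, (iv) holds but $\tau_{2n} \asymp (\log p)^{1/\gamma}/n \asymp n^{-0.1} \gg n^{-0.275} \asymp \sqrt{\log p/n}$, so no admissible $\zeta_n \asymp \sqrt{\log p/n}$ exists and the hypothesis $\lambda_1 > \frac{C+1}{C-1}\zeta_n\|\bm{\beta}^0\|_1$ is incompatible with $\lambda_1 \asymp \sqrt{s_0}\,\phi_{\max}\sqrt{\log p/n}$. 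The corollary sidesteps this by \emph{assuming jointly} that the theorem's hypotheses hold and that $\lambda_1$ has the prescribed order, which implicitly restricts attention to regimes where such a $\zeta_n$ is admissible; you should either invoke that joint assumption, or impose the stronger growth condition $\log p = O(n^{\gamma/(2-\gamma)})$ explicitly, rather than derive the branch domination from (iv).
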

\if0
\ttr{
It is clear that under the assumption that 
\[
s_0^3 \phi_{\max}^2 = o\left(
\frac{n}{\log p}
\right),
\]
the estimator $\hat{\bm{\beta}}_n^1$ is consistent
with respect to both $l_1$ and $l_2$ norms.
}
\fi
\begin{rem}
As far as $\phi_{\max}$ obeys the constant order, 
the rates of convergence of the estimator in both cases are $\|\hat{\bm{\beta}}_n^1 - \bm{\beta}^0\|_1 = O_p(s_0^{3/2} \sqrt{\log p/n})$.
However, for sub-Weibull case, 
we can see that the decay of the 
tail probability derived in 
Theorem \ref{oracle ineq sub-Weibull} is
slower than the corresponding result for 
Gaussian processes, which is caused by the heavy-tail property of sub-Weibull distribution when $\gamma_2  < 1$.
In addition, note that 
the mixing condition assumed in Section \ref{sec:4} is stronger than that for the Gaussian case.
\end{rem}
\section{$l_0$-penalized estimator for Gaussian process.} \label{sec:5}
In this section, we discuss the $l_0$-penalized estimator $\hat{\bm{\beta}}_n^0$ for $\alpha$-mixing Gaussian stationary processes.
The estimator $\hat{\bm{\beta}}_n^0$ is defined as
\begin{equation}\label{l0 penalty}
\hat{\bm{\beta}}_n^0
:= \arg \min_{\bm{\beta} \in \mathcal{B}}
\{R_n (\bm{\beta}) + \lambda_0 \|\bm{\beta}\|_0\},
\end{equation}
where $\lambda_0 \geq 0$ is a tuning parameter for $l_0$-penalized estimator.
Note that for the penalized estimator
(\ref{l0 penalty}), there exists a nonnegative constant $s \geq 0$
such that
\begin{equation*} 
\hat{\bm{\beta}}_n^0
:= \arg \min_{\bm{\beta} \in \mathcal{B} \cap \mathcal{B}_0(s)} R_n(\bm{\beta}),
\end{equation*}
where 
\[
\mathcal{B}_0(s)
:= \{\bm{\beta} \in \mathbb{R}^p : 
\|\bm{\beta}\|_0 \leq s\}.
\]

Let $\tilde{s} = \max\{s_0, s\}$, and $\tilde{\zeta}_n$ be a sequence such that 
\[
\tilde{\zeta}_n := \sqrt{\frac{(b+4\tilde{s}) \log p}{cn}},
\]
where $b$ and $c$ are some positive constants.
Let $\tilde{\gamma}_n$ be defined as
\begin{equation*}
\tilde{\gamma}_n =
\phi_{\max}\biggl( \tilde{\zeta}_n \sum_{l=0}^n \rho(l)\biggr).
\end{equation*}
This is parallel to the definition of $\gamma_n$ in \eqref{eq:3.1l} for Lasso-type estimator.
The oracle inequality is then derived as follows.
\begin{thm}\label{oracle l0}
Suppose that Assumptions \ref{model setups} and \ref{Gaussian} hold.
For every $p>6$ and $n$ such that 
$\tilde{\zeta}_n^2 < \tilde{\zeta}_n$,
it holds that 
\begin{eqnarray*}
P\left(\|\hat{\bm{\beta}}_n^0 - \bm{\beta}^0\|_2
\leq \delta_n\right)
\geq 1-\exp\left(
-b \log p
\right),
\end{eqnarray*}
where 
\[
\delta_n := \frac{\tilde{\gamma}_n \phi_{\max}+\sqrt{\tilde{\gamma}_n^2 \phi_{\max}^2+4 (\sigma-3\eta- \tilde{\gamma}_n)  \tilde{s} \lambda_0}}{2(\sigma-3 \eta - \tilde{\gamma}_n)}.
\]
\end{thm}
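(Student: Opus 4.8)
The plan is to reduce the claim to a scalar quadratic inequality whose larger root is exactly $\delta_n$, by combining the second-order expansion of the empirical risk with the strong convexity supplied by Proposition \ref{theoretical risk convex}. I would work throughout with the constrained reformulation $\hat{\bm{\beta}}_n^0 = \arg\min_{\bm{\beta}\in\mathcal{B}\cap\mathcal{B}_0(s)} R_n(\bm{\beta})$ and set $\bm{h} := \hat{\bm{\beta}}_n^0 - \bm{\beta}^0$. Because $\bm{\beta}^0$ is $s_0$-sparse while $\|\hat{\bm{\beta}}_n^0\|_0 \le s$, the difference $\bm{h}$ is supported on at most $s_0 + s \le 2\tilde{s}$ coordinates; this sparsity is the device that will let me replace full operator norms of $W_n = \hat{\Sigma}_n - \Sigma_0$ by their restriction to low-dimensional sparse directions. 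Optimality of $\hat{\bm{\beta}}_n^0$ together with admissibility of $\bm{\beta}^0$ gives the basic inequality
\[
R_n(\hat{\bm{\beta}}_n^0) - R_n(\bm{\beta}^0) \le \lambda_0\bigl(s_0 - \|\hat{\bm{\beta}}_n^0\|_0\bigr) \le \tilde{s}\,\lambda_0.
\]

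Next I would expand the left-hand side to second order about $\bm{\beta}^0$. Using $\dot{R}_n(\bm{\beta}^0) = \dot{R}(\bm{\beta}^0) - W_n\bm{\beta}^0 = -W_n\bm{\beta}^0$ together with $\ddot{R}_n = \ddot{R} - W_n$, there is a point $\tilde{\bm{\beta}}$ on the segment joining $\bm{\beta}^0$ and $\hat{\bm{\beta}}_n^0$, which lies in the convex set $\mathcal{B}$, such that
\[
R_n(\hat{\bm{\beta}}_n^0) - R_n(\bm{\beta}^0) = -(\bm{\beta}^0)^\top W_n\bm{h} + \tfrac12\bm{h}^\top\ddot{R}(\tilde{\bm{\beta}})\bm{h} - \tfrac12\bm{h}^\top W_n\bm{h}.
\]
Proposition \ref{theoretical risk convex} furnishes the curvature lower bound $\tfrac12\bm{h}^\top\ddot{R}(\tilde{\bm{\beta}})\bm{h} \ge (\sigma-3\eta)\|\bm{h}\|_2^2$ on $\mathcal{B}$. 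The two stochastic terms are both governed by the bilinear form of $W_n$ restricted to the sparse supports of $\bm{h}$ (size $\le 2\tilde{s}$) and of $\bm{\beta}^0 = \phi_{\max}\bm{q}^0$ (size $s_0$, with $\|\bm{q}^0\|_2 = 1$). On the event that this restricted norm is suitably small I obtain, after the calibration of constants that defines $\tilde{\gamma}_n$, the bounds $|(\bm{\beta}^0)^\top W_n\bm{h}| \le \tilde{\gamma}_n\phi_{\max}\|\bm{h}\|_2$ and $\tfrac12|\bm{h}^\top W_n\bm{h}| \le \tilde{\gamma}_n\|\bm{h}\|_2^2$. Substituting these into the basic inequality and rearranging produces the deterministic quadratic inequality
\[
(\sigma-3\eta-\tilde{\gamma}_n)\|\bm{h}\|_2^2 - \tilde{\gamma}_n\phi_{\max}\|\bm{h}\|_2 - \tilde{s}\,\lambda_0 \le 0,
\]
and since $\delta_n$ is precisely the larger root of $(\sigma-3\eta-\tilde{\gamma}_n)x^2 - \tilde{\gamma}_n\phi_{\max}x - \tilde{s}\lambda_0 = 0$, this forces $\|\bm{h}\|_2 \le \delta_n$ on the event.

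It then remains to show that the good event has probability at least $1 - \exp(-b\log p)$, and this is the main obstacle. I must control, uniformly over all sparse supports of cardinality at most $2\tilde{s}$, the restricted bilinear form of $W_n$ for the $\alpha$-mixing (hence $\rho$-mixing) Gaussian process. My plan is first to establish a fixed-direction concentration inequality in which the serial dependence enters through $\sum_{l=0}^n\rho(l)$ --- the same mechanism that produces $\gamma_n$ in the Lasso analysis of Section \ref{sec:3} --- and then to take a union bound over the $\lesssim p^{4\tilde{s}}$ relevant pairs of sparse directions. Calibrating the per-direction deviation level so that the exponent absorbs this combinatorial factor is exactly what replaces $\zeta_n$ by $\tilde{\zeta}_n = \sqrt{(b+4\tilde{s})\log p/(cn)}$ and collapses the failure probability to the clean bound $\exp(-b\log p)$; the hypotheses $p>6$ and $\tilde{\zeta}_n^2 < \tilde{\zeta}_n$ keep us in the regime where these tail estimates and the quadratic-root manipulation remain valid. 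Assembling the deterministic inequality on this event completes the argument.
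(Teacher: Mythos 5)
Your proposal is correct and reaches the paper's conclusion by the same overall strategy: you work on the event $\bigl\{\sup_{\bm{v}\in\mathcal{B}_0(2\tilde{s})\cap\mathbb{S}^{p-1}}|\bm{v}^\top W_n\bm{v}|\leq\tilde{\gamma}_n\bigr\}$, exploit that $\hat{\bm{\beta}}_n^0-\bm{\beta}^0$ is $2\tilde{s}$-sparse, and arrive at exactly the paper's quadratic inequality $(\sigma-3\eta-\tilde{\gamma}_n)\|\bm{h}\|_2^2-\tilde{\gamma}_n\phi_{\max}\|\bm{h}\|_2-\tilde{s}\lambda_0\leq 0$, whose larger root is $\delta_n$; and your probabilistic step (fixed-direction Hanson--Wright with the $\rho$-mixing bound on $\|Q_n\|_2$, then a net/union bound over sparse directions with the $\tilde{\zeta}_n=\sqrt{(b+4\tilde{s})\log p/(cn)}$ calibration) is precisely Lemma \ref{maximal ineq l0}, down to the role of $p>6$ in absorbing the $6^{2\tilde{s}}$ covering factor. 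Where you genuinely differ is the deterministic step: the paper transplants the Lasso argument of Theorem \ref{oracle ineq Lasso}, invoking the two-point inequality (Lemma 7.1 of van de Geer) at $\hat{\bm{\beta}}_n^0$ together with the two-point margin, whereas you start from the basic inequality $R_n(\hat{\bm{\beta}}_n^0)+\lambda_0\|\hat{\bm{\beta}}_n^0\|_0\leq R_n(\bm{\beta}^0)+\lambda_0 s_0$ and perform a second-order Taylor expansion of $R_n$ at $\bm{\beta}^0$, using $\dot{R}_n(\bm{\beta}^0)=-W_n\bm{\beta}^0$, $\ddot{R}_n=\ddot{R}-W_n$, and the curvature bound from Proposition \ref{theoretical risk convex} at the intermediate point. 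This buys you two things: the two-point inequality is stated for convex penalties, and $\|\cdot\|_0$ is not convex, so your basic-inequality route is cleaner on this point than the paper's ``along the same lines'' appeal; and your treatment of the cross term $(\bm{\beta}^0)^\top W_n\bm{h}$ via the operator norm of the symmetric submatrix restricted to the joint support (equivalently, polarization over $2\tilde{s}$-sparse directions) avoids the paper's formal factorization $\|W_n^{1/2}\bm{\beta}^0\|_2\|W_n^{1/2}(\hat{\bm{\beta}}_n^0-\bm{\beta}^0)\|_2$, which is not literally well-defined since $W_n=\hat{\Sigma}_n-\Sigma_0$ need not be positive semidefinite. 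Two small remarks: your union bound over \emph{pairs} of sparse directions is unnecessary---by symmetry the restricted operator norm is already the supremum of the quadratic form over single $2\tilde{s}$-sparse unit vectors, which is what Lemma \ref{maximal ineq l0} controls---and, like the paper, you should record the standard net-approximation factor when passing from the supremum over $\mathcal{B}_0(2\tilde{s})\cap\mathbb{S}^{p-1}$ to the maximum over the $1/2$-net, but neither point affects the rate or the final bound.
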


\begin{rem}
By the inequality of arithmetic and geometric means, it is easy to see that $\delta_n$ takes its lower bound of the order  when it holds that
\[
\phi_{\max}\tilde{\gamma}_n \asymp \sqrt{\phi_{\max}^2 \tilde{\gamma}_n^2 + 4(\sigma - 3\eta - \tilde{\gamma}_n) \tilde{s} \lambda_0}.
\]
In other words, $\tilde{s} \lambda_0 = O_p(\phi_{\max}^2\tilde{\gamma}_n^2)$.
This implies that $\delta_n = O_p(\phi_{\max} \tilde{\gamma}_n)$.
\end{rem}

Now we can establish the rate of convergence 
of the estimator $\hat{\bm{\beta}}_n^0$ as follows.
\begin{cor}\label{rate of convergence l0}
Suppose that $\sum_{l=0}^\infty \rho(l) < \infty$, $\log p \to \infty$ as $n \to \infty$, and that 
\[
\lambda_0 \asymp \phi_{\max}^2 \frac{\log p}{n}.
\]
Then, under the same assumptions in Theorem \ref{oracle l0}, it holds that 
\[
\|\hat{\bm{\beta}}_n^0 - \bm{\beta}^0\|_2^2
= O_p\left(
\tilde{s} \phi_{\max}^4 \frac{\log p}{n} 
\right),\quad
n \to \infty.
\]
\end{cor}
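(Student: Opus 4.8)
The plan is to derive Corollary \ref{rate of convergence l0} directly from Theorem \ref{oracle l0} by substituting the prescribed orders for the tuning parameter and the controlling sequence, then tracking the resulting order of $\delta_n$. First I would record that under $\sum_{l=0}^\infty \rho(l) < \infty$ the partial sums $\sum_{l=0}^n \rho(l)$ are bounded by a constant, so from the definition of $\tilde\gamma_n$ and $\tilde\zeta_n$ we obtain
\[
\tilde\gamma_n = \phi_{\max}\Bigl(\tilde\zeta_n \sum_{l=0}^n \rho(l)\Bigr) \asymp \phi_{\max}\,\tilde\zeta_n \asymp \phi_{\max}\sqrt{\frac{\tilde{s}\log p}{n}},
\]
using $\tilde\zeta_n = \sqrt{(b+4\tilde s)\log p/(cn)} \asymp \sqrt{\tilde s \log p/n}$ since $b$ is a fixed constant. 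The condition $\log p \to \infty$ together with $\log p/n = o(1)$ from Assumption \ref{model setups} guarantees $\tilde\zeta_n \to 0$, hence $\tilde\zeta_n^2 < \tilde\zeta_n$ eventually and the hypothesis of Theorem \ref{oracle l0} is met, while $\tilde\gamma_n \to 0$ ensures $\sigma - 3\eta - \tilde\gamma_n$ is bounded below by a positive constant for large $n$.

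Next I would analyze $\delta_n$. The denominator $2(\sigma - 3\eta - \tilde\gamma_n)$ is asymptotically a positive constant, so the order of $\delta_n$ is governed by the numerator $\phi_{\max}\tilde\gamma_n + \sqrt{\phi_{\max}^2\tilde\gamma_n^2 + 4(\sigma-3\eta-\tilde\gamma_n)\tilde s\lambda_0}$. The remark preceding the corollary already identifies the balancing regime: choosing $\lambda_0$ so that $\tilde s \lambda_0 = O(\phi_{\max}^2\tilde\gamma_n^2)$ makes both terms under and outside the root of the same order, yielding $\delta_n = O(\phi_{\max}\tilde\gamma_n)$. I would verify that $\lambda_0 \asymp \phi_{\max}^2 \log p/n$ realizes this balance: indeed
\[
\tilde s \lambda_0 \asymp \tilde s\,\phi_{\max}^2\frac{\log p}{n} \asymp \phi_{\max}^2\tilde\gamma_n^2,
\]
since $\phi_{\max}^2\tilde\gamma_n^2 \asymp \phi_{\max}^4 \tilde s \log p/n$ — so in fact $\tilde s\lambda_0 \asymp \phi_{\max}^2 \tilde\gamma_n^2/\phi_{\max}^2 \cdot \phi_{\max}^2$, which I would check matches up to the factor $\phi_{\max}^2$; the cleaner route is simply to bound $4(\sigma-3\eta-\tilde\gamma_n)\tilde s\lambda_0 = O(\tilde s \lambda_0) = O(\phi_{\max}^2\tilde s \log p/n)$ and compare directly with $\phi_{\max}^2\tilde\gamma_n^2 \asymp \phi_{\max}^4 \tilde s\log p/n$.

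This comparison is the one place demanding care, and I expect it to be the main (if modest) obstacle: one must confirm that the term $4(\sigma-3\eta-\tilde\gamma_n)\tilde s\lambda_0$ inside the square root does not dominate $\phi_{\max}^2\tilde\gamma_n^2$, so that $\delta_n = O_p(\phi_{\max}\tilde\gamma_n)$ rather than a larger order. Using the subadditivity $\sqrt{a+b} \leq \sqrt a + \sqrt b$, I would write
\[
\delta_n \leq \frac{\phi_{\max}\tilde\gamma_n + \phi_{\max}\tilde\gamma_n + \sqrt{4(\sigma-3\eta-\tilde\gamma_n)\tilde s\lambda_0}}{2(\sigma-3\eta-\tilde\gamma_n)} = O\Bigl(\phi_{\max}\tilde\gamma_n + \sqrt{\tilde s\lambda_0}\Bigr).
\]
With $\phi_{\max}\tilde\gamma_n \asymp \phi_{\max}^2\sqrt{\tilde s\log p/n}$ and $\sqrt{\tilde s\lambda_0} \asymp \sqrt{\tilde s}\,\phi_{\max}\sqrt{\log p/n} = \phi_{\max}\sqrt{\tilde s\log p/n}$, the first term dominates whenever $\phi_{\max} \gtrsim 1$, giving $\delta_n = O(\phi_{\max}^2\sqrt{\tilde s\log p/n})$. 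Finally I would square this bound to conclude
\[
\|\hat{\bm{\beta}}_n^0 - \bm{\beta}^0\|_2^2 = O_p\Bigl(\tilde s\,\phi_{\max}^4\frac{\log p}{n}\Bigr),
\]
noting that the probability $1 - \exp(-b\log p) = 1 - p^{-b} \to 1$ as $\log p \to \infty$ converts the high-probability event of Theorem \ref{oracle l0} into the $O_p$ statement.
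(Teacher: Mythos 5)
Your proposal is correct and takes essentially the same route as the paper, whose argument for this corollary is exactly the balancing in the remark following Theorem \ref{oracle l0}: substitute $\tilde{\gamma}_n \asymp \phi_{\max}\sqrt{\tilde{s}\log p/n}$ and $\lambda_0 \asymp \phi_{\max}^2 \log p/n$ into $\delta_n$, conclude $\delta_n = O(\phi_{\max}\tilde{\gamma}_n) = O\bigl(\phi_{\max}^2\sqrt{\tilde{s}\log p/n}\bigr)$, square, and use $\log p \to \infty$ so that $1-\exp(-b\log p)\to 1$. The one condition you leave informal, $\phi_{\max} \gtrsim 1$ (needed so that $\sqrt{\tilde{s}\lambda_0}$ does not dominate $\phi_{\max}\tilde{\gamma}_n$), is supplied by Assumption \ref{model setups}, since $\phi_{\max} \geq \phi_2 + \sigma \geq \sigma > 0$.
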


It can be seen from Corollaries \ref{rate of convergence Lasso} and \ref{rate of convergence l0} that
$l_0$-penalized estimators and Lasso-type estimators have similar rate of convergence $O_p(\log p/n)$ in terms of squared errors.
Looking into the factors in detail, we find that $\|\hat{\bm{\beta}}_n^1 - \bm{\beta}^0\|_2^2 \propto s_0^4 \phi_{\max}^2$
for Lasso-type estimators.
On the other hand,
for $l_0$-penalized estimators, we have $\|\hat{\bm{\beta}}_n^0 - \bm{\beta}^0\|_2^2 \propto s_0 \phi_{\max}^4$
under the situation $s \asymp s_0$.
Thus, the performance of Lasso-type estimators and $l_0$-penalized estimators has a tradeoff
between the largest eigenvalues and the sparsity of the true vector.

%

\section{Simulation studies.} \label{sec:6}
In this section, we investigate the finite sample performance of the sparse principal component analyses for stationary processes.
We also provide a real data example of average temperatures in Kyoto analyzed by the sparse PCA.

\subsection{Finite sample performance.} \label{subsec:6.1}
We generate the observation stretch from the model \eqref{eq:2.1}.
The first principal component vector $\bm{p}_1$ of the coefficient matrix $A$
is supposed to be 
\[
\bm{p}_1 = \frac{\bigl(f(1/p), \dots, f(p/p)\bigr)^{\top}}{\|\bigl(f(1/p), \dots, f(p/p)\bigr)\|},
\]
where the function $f$ is specified by each one of the functions in Figure \ref{fig:1}.
The function in the left figure is known as the three-peak function 
and
the function in the right figure is known as the step function.

\begin{figure}[H]
\begin{center}
 \subfigure 
 {\includegraphics[clip, width=0.45\columnwidth]{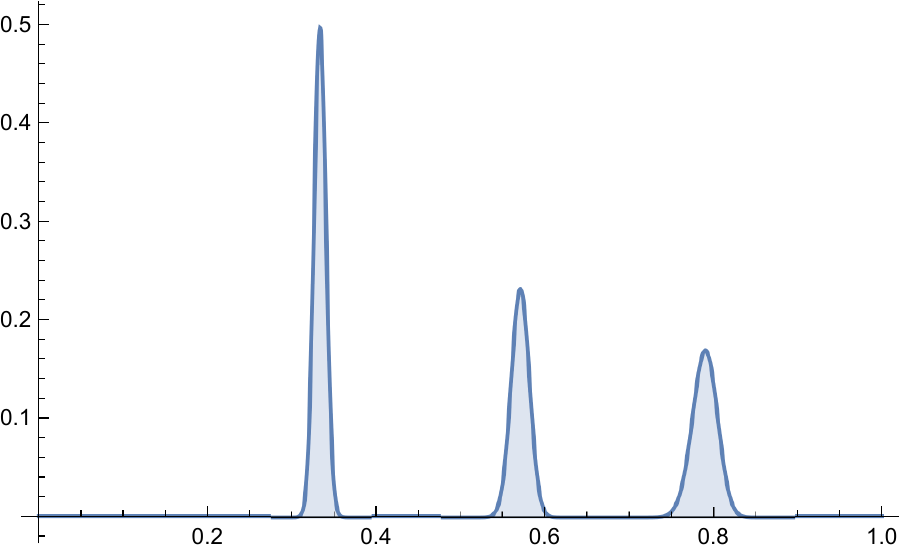}} \qquad 
 \subfigure 
 {\includegraphics[clip, width=0.45\columnwidth]{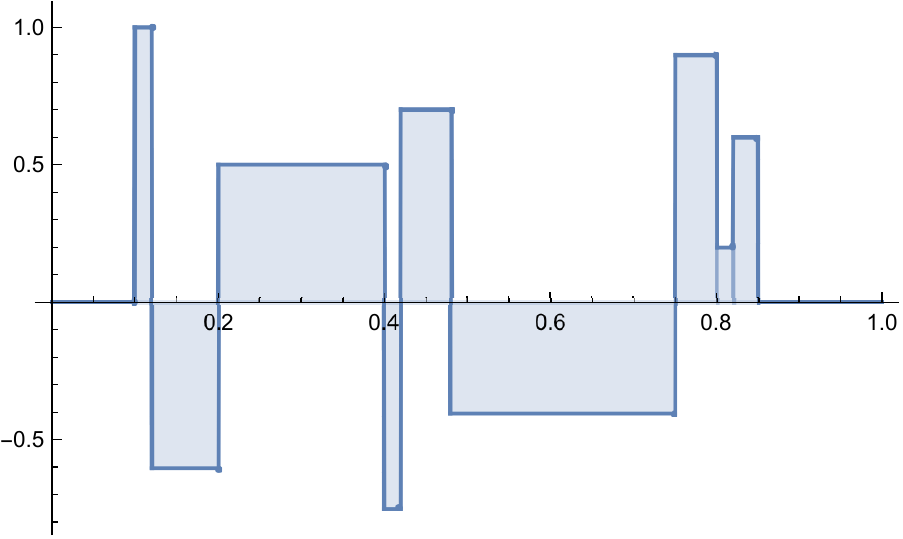}} 
\caption{Functions to generate the first principal component vector $\bm{p}_1$.}
\label{fig:1}
\end{center}
\end{figure}

The eigenvalues $\nu_1, \dots, \nu_p$ of $A$ are determined by $\nu_j = \nu^{j}$, $j = 1, \dots, p$,
where $\nu \in \{0.85, 0.6, 0.35, 0.1\}$.
Under this setting, the ratio of the first eigenvalue of the covariance matrix $\Sigma_0$ to the second eigenvalue 
is $1 + \nu^2$.
We compare the average squared errors $\|\hat{\bm{\beta}}_n - \bm{\beta}^0\|/p$
in $l_0$-penalized estimator, $l_1$-penalized estimator and standard principal component analysis.
The dimension $p$ of the stationary process is specified as $p = 512$, while
the number of the observation is specified as $n = 256$.
The penalty parameter $\lambda$ is taken as $\lambda_0 = 3 \log(p)/n$ and $\lambda_1 = (\log(p)/n)^{1/2}/3$.
The numerical results reported in Table \ref{tbl:1} are obtained over 1000 runs for 
the model \eqref{eq:2.1} with i.i.d.~innovations as centered Gaussian distribution 
and centered two-sided Weibull distribution with the shape parameter 0.5, respectively.
Here, the covariance matrix of innovations is the identity matrix.

From Table \ref{tbl:1}, we see that the penalized principal component analysis
performs better in terms of the loss than the standard principal component analysis for all cases.
The penalized principal component analyses show similar performance,
while the $l_1$-penalized estimator performs better in almost all cases in these simulations.
The $l_0$-penalized estimators are prone to choose a small number of features
while the $l_1$-penalized estimators balance the average squared error and the number of features.

\begin{table}[H]
\caption{Comparison of principal component analyses; Peak refers to the three-peak function
and Step refers to the step function.
Loss refers to the average squared error and Size refers to the number of the estimated elements in the first principal component.}
\begin{center}
\begin{tabular}{lccccc}
\hline
& & $l_0$-penalized estimator & $l_1$-penalized estimator & Standard PCA\\ \hline
{\bf Gaussian} \\ \hline
Vector & $\nu_1$ & Loss (Size)  & Loss (Size) & Loss \\[5pt] \hline
Peak & 0.85 & 0.00304 (29.01) & 0.00314 (47.92) & 0.00422 \\
 & 0.60 & 0.00350 (35.98) & 0.00297 (49.43) & 0.00513 \\
 & 0.35 & 0.00331 (35.73) & 0.00273 (49.42) & 0.00505 \\
 & 0.10 & 0.00324 (35.55) & 0.00265 (49.42) & 0.00501 \\
Step & 0.85 & 0.00410 (42.35) & 0.00411 (47.17) & 0.00424 \\
 & 0.60 & 0.00362 (47.03) & 0.00298 (51.58) & 0.00513 \\
 & 0.35 & 0.00342 (46.89) & 0.00272 (51.51) & 0.00505 \\
 & 0.10 & 0.00336 (46.98) & 0.00263 (51.41) & 0.00501 \\
 {\bf Weibull} \\ \hline
Vector & $\nu_1$ & Loss (Size)  & Loss (Size) & Loss \\[5pt] \hline
Peak & 0.85 & 0.00566 ( 5.33) & 0.00523 ( 9.07) & 0.00607 \\
 & 0.60 & 0.00523 (13.61) & 0.00470 (19.69) & 0.00595 \\
 & 0.35 & 0.00508 (13.68) & 0.00453 (19.37) & 0.00584 \\
 & 0.10 & 0.00500 (13.92) & 0.00444 (19.86) & 0.00578 \\
Step & 0.85 & 0.00587 (14.16) & 0.00540 (20.08) & 0.00603 \\
 & 0.60 & 0.00522 (14.01) & 0.00467 (20.04) & 0.00595 \\
 & 0.35 & 0.00506 (14.09) & 0.00449 (19.98) & 0.00582 \\
 & 0.10 & 0.00505 (13.91) & 0.00448 (19.73) & 0.00583 \\ \hline
 \end{tabular}
\end{center}
\label{tbl:1}
\end{table}%

\subsection{Real data example.}
We apply the sparse principal component analyses to the dataset of daily average temperatures in Kyoto, Japan.
The data are from January 1, 1901 to December 31, 2020, which are over the last 120 years.
We removed the temperature of February 29, if exists, to make each year have 365 days.
To summarize, the dimension of the data is 365 and the sample size is 120.

In general, the temperature data appear to increase over the years.
Assuming a linear trend in the temperature,
we removed the trend from the original data and obtained detrended data,
of which the partial data are shown in the left figure in Figure \ref{fig:2}.
The detrended temperature data appear to be stationary.
We also plot the eigenvalues of the covariance matrix obtained from the detrended data.
The spikiness condition in Assumption \ref{model setups} seems to be satisfied.
The sparsity feature of these data can be confirmed from the right figure in Figure \ref{fig:2}.

\begin{figure}[H]
\begin{center}
 \subfigure 
 {\includegraphics[clip, width=0.45\columnwidth]{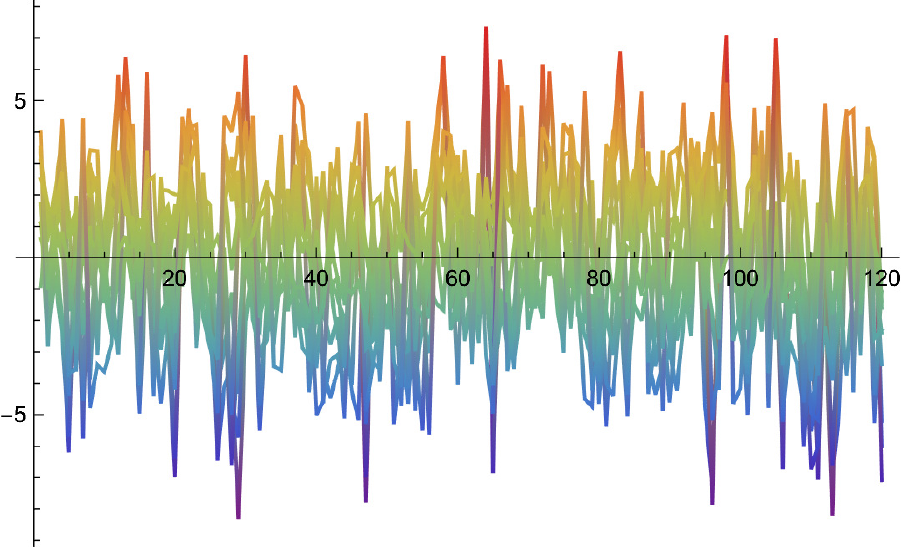}} \qquad 
 \subfigure 
 {\includegraphics[clip, width=0.45\columnwidth]{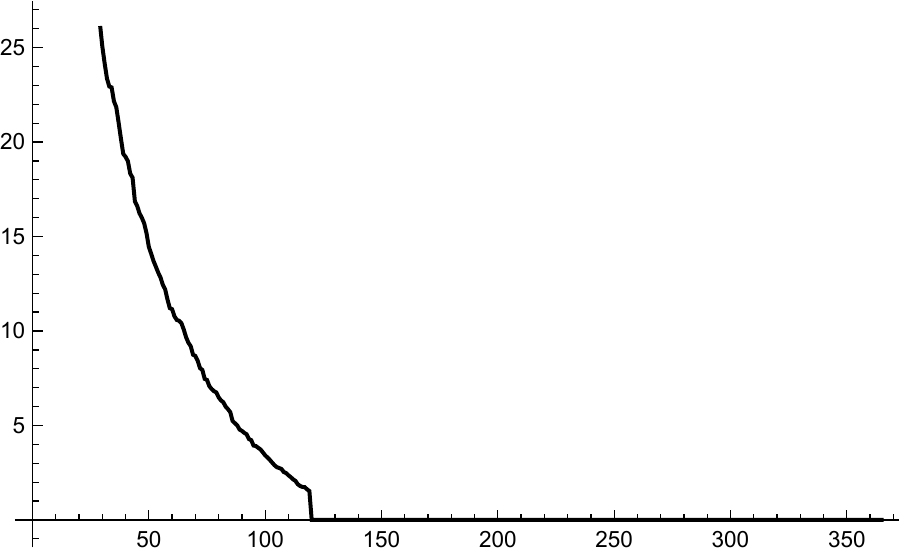}} 
\caption{(Left) The detrended temperature data from April 20 to April 30 over 120 years.
(Right) Eigenvalues of covariance matrix of the detrended temperature data.}
\label{fig:2}
\end{center}
\end{figure}

The sparse principal component analyses and the standard principal component analysis 
are applied to the data.
The penalty parameter $\lambda$ is taken as $\lambda_0 = 3 \log(p)/n$ and $\lambda_1 = (\log(p)/n)^{1/2}/3$
for $l_0$-penalty and $l_1$-penalty, respectively.
The numerical results are obtained as Figure \ref{fig:3}.
The nonzero coefficients are shown in colors with rainbow plots. The estimates of large absolute value are in red while those of small absolute value are in blue.
We can also find that the $l_0$-penalized estimate shows the smallest number of features in the first principal component, compared with other two methods,
while the $l_1$-penalized estimate harmonize the standard principal components analysis with the $l_0$-penalized one.

The sparse principal component analyses explain the feature of the temperatures in Kyoto well.
It is well known that  the temperature in Kyoto is radically going up and down during February and March over years.
Thus there is much more temperature variation during these months.
This result matches the data provided by Japan Meteorological Agency.
In summary, this feature is extracted by the sparse principal component analyses.

\begin{figure}[H]
\begin{center}
\includegraphics[clip, width=0.45\columnwidth]{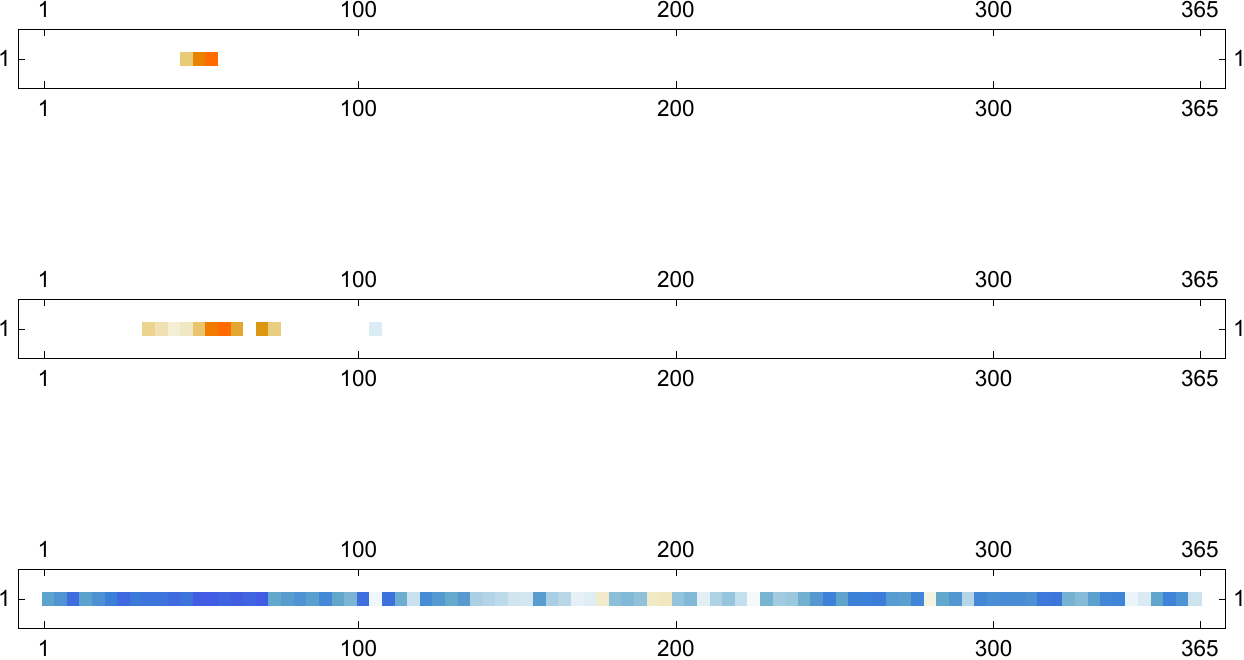} 
\caption{Plots for the estimated first principal components.
From the top to bottom, plots are for the $l_0$-penalized estimate, the $l_1$-penalized estimate,
and the standard one.}
\label{fig:3}
\end{center}
\end{figure}

\section{Proofs} \label{sec:7}
In this section, we complement the rigorous proofs for the main results in Sections \ref{sec:3}--\ref{sec:5}.
First, we summarize some crucial technical results in Subsections \ref{subsec:1} and \ref{subsec:2}
for $\alpha$-mixing Gaussian processes and $\beta$-mixing sub-Weibull processes, respectively.
The proofs for technical results in Subsection \ref{subsec:1} can be found in Subsection \ref{subsec:3},
while the proofs for technical results in Subsection \ref{subsec:2}
can be found in Subsection \ref{subsec:4}.
Subsection \ref{subsec:5} offers the proofs for $l_0$-penalized estimators.

\subsection{Technical results for Gaussian process.} \label{subsec:1}
We use the following concentration inequality, which is a modified form of the 
Hanson-Wright inequality. 
\begin{lem}\label{Hanson-Wright}
Let $\bm{Y} \sim N(\bm{0}, Q)$
be an $n$-dimensional normal random vector.
Then, there exists a universal constant 
$c>0$ such that for any $\eta>0$,
\[
P\left(
\frac{1}{n}\left|
\|\bm{Y}\|_2^2-E[\|\bm{Y}\|_2^2]
\right|
> \eta \|Q\|_2
\right)
\leq 2 \exp\left(
-cn \min \{\eta, \eta^2\}
\right).
\]
\end{lem}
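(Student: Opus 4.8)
The plan is to reduce the statement to the classical Hanson-Wright inequality for quadratic forms in a standard Gaussian vector, and then to match the two-regime exponent produced by that inequality to the single factor $n\min\{\eta,\eta^2\}$ claimed here. Since $Q$ is symmetric and positive semidefinite, I would first factor $\bm{Y} = Q^{1/2}\bm{Z}$ with $\bm{Z} \sim N(\bm{0}, I_n)$, so that
\[
\|\bm{Y}\|_2^2 = \bm{Z}^\top Q \bm{Z},
\]
a quadratic form in the independent standard Gaussian coordinates $Z_1,\ldots,Z_n$, each of which has sub-Gaussian norm bounded by a universal constant. Note also that $E[\bm{Z}^\top Q \bm{Z}] = \tr(Q) = E[\|\bm{Y}\|_2^2]$, so the centering in the two formulations coincides.

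Next I would invoke the Hanson-Wright inequality in its standard form: for a vector $\bm{Z}$ with independent, mean-zero, sub-Gaussian entries and any matrix $A$, there is a universal constant $c_0 > 0$ with
\[
P\left(\left|\bm{Z}^\top A \bm{Z} - E[\bm{Z}^\top A \bm{Z}]\right| > t\right) \leq 2\exp\left(-c_0 \min\left\{\frac{t^2}{\|A\|_F^2},\, \frac{t}{\|A\|_2}\right\}\right),
\]
where the (universal) sub-Gaussian constant of the standard Gaussian coordinates has been absorbed into $c_0$. Applying this with $A = Q$ reduces everything to a choice of $t$.

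The one computation that must be done carefully is the choice $t = n\eta\|Q\|_2$ together with the Frobenius-versus-operator norm comparison. With this $t$ we have $t/\|Q\|_2 = n\eta$, and since $Q$ is symmetric positive semidefinite its operator norm equals its largest eigenvalue, giving
\[
\|Q\|_F^2 = \sum_{j} \lambda_j(Q)^2 \leq n\,\lambda_{\max}(Q)^2 = n\|Q\|_2^2.
\]
Hence $t^2/\|Q\|_F^2 \geq n^2\eta^2\|Q\|_2^2/(n\|Q\|_2^2) = n\eta^2$, so that $\min\{t^2/\|Q\|_F^2,\, t/\|Q\|_2\} \geq n\min\{\eta,\eta^2\}$. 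Substituting $t = n\eta\|Q\|_2$ back into the probability and dividing through by $n$ inside the event turns the left-hand side into exactly $P\big(\tfrac{1}{n}|\|\bm{Y}\|_2^2 - E[\|\bm{Y}\|_2^2]| > \eta\|Q\|_2\big)$, and the exponent becomes $-c_0 n\min\{\eta,\eta^2\}$, which is the claim with $c = c_0$.

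There is no substantive obstacle here beyond bookkeeping; the result is essentially a specialization of Hanson-Wright to a Gaussian quadratic form. The only point requiring genuine care is the eigenvalue bound $\|Q\|_F^2 \leq n\|Q\|_2^2$, which is precisely what collapses the two competing regimes of the Hanson-Wright exponent into the single expression $n\min\{\eta,\eta^2\}$ after the normalizing choice $t = n\eta\|Q\|_2$; the Gaussian factorization and the substitution are otherwise routine.
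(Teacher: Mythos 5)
Your proof is correct and is essentially the argument the paper delegates to its citations: the paper itself offers no proof of this lemma, referring instead to \cite{rudelson2013hanson}, \cite{basu2015regularized}, and \cite{wong2020lasso}, where exactly your reduction appears --- factor $\bm{Y}=Q^{1/2}\bm{Z}$, apply the Rudelson--Vershynin Hanson--Wright inequality to $\bm{Z}^\top Q\bm{Z}$ with $t=n\eta\|Q\|_2$, and collapse the two regimes via $\|Q\|_F^2\leq n\|Q\|_2^2$. No gaps; the bookkeeping, including the eigenvalue comparison that yields $\min\{t^2/\|Q\|_F^2,\,t/\|Q\|_2\}\geq n\min\{\eta,\eta^2\}$, is exactly right.
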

See \cite{rudelson2013hanson}, \cite{basu2015regularized}, and 
\cite{wong2020lasso} for 
the detail of this inequality.

The next lemma guarantees that the empirical risk $R_n(\cdot)$ is also strictly convex on $\mathcal{B}$ with large probability.
\begin{lem}\label{empirical risk convex}
Suppose that the same assumptions as Proposition \ref{theoretical risk convex} and 
Assumption \ref{Gaussian} hold.
Then, for every $0 < \xi_n < \tau_n$, where 
\[
\tau_n := \frac{\sigma-3 \eta}{\sum_{l=0}^n \rho(l) \phi_{\max}},
\]
and for every unit vector $\bm{v} \in \mathbb{R}^p$, it holds that 
\[
P\left(|\bm{v}^\top W_n \bm{v}| > 
\xi_n \sum_{l=0}^n \rho(l) \phi_{\max}\right)
\leq 2 \exp(-cn \min(\xi_n,\xi_n^2)),
\]
which implies that
\[
P(\bm{v}^\top \ddot{R}_n(\bm{\beta})\bm{v} >0)
\geq 1-2 \exp(-cn \min(\xi_n,\xi_n^2)),\quad
\bm{\beta} \in \mathcal{B},
\]
with some universal constant $c>0$.
\end{lem}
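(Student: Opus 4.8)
The plan is to reduce the quadratic form $\bm{v}^\top W_n \bm{v}$ to a centered second-order Gaussian chaos and then invoke the Hanson--Wright bound of Lemma \ref{Hanson-Wright}. Fix a unit vector $\bm{v}$ and set $Y_t := \bm{v}^\top \bm{X}_t$. Since $\{\bm{X}_t\}$ is a centered Gaussian stationary process, the vector $\bm{Y} := (Y_1,\ldots,Y_n)^\top$ is centered Gaussian, $\bm{Y} \sim N(\bm{0}, Q)$, with $Q$ the symmetric Toeplitz matrix whose entries are $Q_{st} = \gamma_Y(s-t)$, where $\gamma_Y(l) := \bm{v}^\top \Gamma(l)\bm{v}$ and $\Gamma(l) := E[\bm{X}_t \bm{X}_{t+l}^\top]$. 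A direct computation gives $\bm{v}^\top \hat{\Sigma}_n \bm{v} = n^{-1}\|\bm{Y}\|_2^2$ and $\bm{v}^\top \Sigma_0 \bm{v} = n^{-1}E[\|\bm{Y}\|_2^2]$, so that
\[
\bm{v}^\top W_n \bm{v} = \frac{1}{n}\left(\|\bm{Y}\|_2^2 - E[\|\bm{Y}\|_2^2]\right).
\]

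Applying Lemma \ref{Hanson-Wright} with its free parameter set to $\xi_n$ yields
\[
P\left(|\bm{v}^\top W_n \bm{v}| > \xi_n \|Q\|_2\right) \le 2\exp\left(-cn\min\{\xi_n,\xi_n^2\}\right),
\]
so the first assertion follows once $\|Q\|_2 \le \phi_{\max}\sum_{l=0}^n\rho(l)$, because then the event in the statement is contained in the event above. To estimate the operator norm I would use the symmetric-Toeplitz row-sum bound $\|Q\|_2 \le \max_{s}\sum_{t=1}^n |\gamma_Y(s-t)| \le \sum_{l=-(n-1)}^{n-1}|\gamma_Y(l)|$, and control each autocovariance through the $\rho$-mixing coefficient: taking $f = Y_t/\sqrt{\gamma_Y(0)}$ and $g = Y_{t+l}/\sqrt{\gamma_Y(0)}$, which are centered with unit variance by stationarity, the definition of $\rho(l)$ gives $|\gamma_Y(l)| \le \rho(l)\,\gamma_Y(0)$. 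Bounding $\gamma_Y(0) = \bm{v}^\top \Sigma_0 \bm{v}$ by the top eigenvalue of $\Sigma_0$ and summing over the lags $l$ then delivers the operator-norm bound $\|Q\|_2 \le \phi_{\max}\sum_{l=0}^n\rho(l)$ recorded in the statement. This translation of the mixing condition into a spectral-norm bound on $Q$ is the crux of the argument; everything else is bookkeeping.

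For the implication I would argue deterministically on the complement of the exceptional event. Using $\ddot{R}_n(\bm{\beta}) - \ddot{R}(\bm{\beta}) = -W_n$,
\[
\bm{v}^\top \ddot{R}_n(\bm{\beta})\bm{v} = \bm{v}^\top\ddot{R}(\bm{\beta})\bm{v} - \bm{v}^\top W_n\bm{v}.
\]
The proof of Proposition \ref{theoretical risk convex} in fact furnishes the quantitative lower bound $\bm{v}^\top \ddot{R}(\bm{\beta})\bm{v} \ge \sigma - 3\eta$, uniformly over $\bm{\beta}\in\mathcal{B}$ and unit $\bm{v}$, the constant reflecting the spectral gap $\sigma$ and the radius $\eta$ of $\mathcal{B}$. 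On the event $\{|\bm{v}^\top W_n\bm{v}| \le \xi_n\phi_{\max}\sum_{l=0}^n\rho(l)\}$, whose probability is at least $1 - 2\exp(-cn\min\{\xi_n,\xi_n^2\})$ by the first part, the hypothesis $\xi_n < \tau_n = (\sigma-3\eta)/(\phi_{\max}\sum_{l=0}^n\rho(l))$ gives $|\bm{v}^\top W_n\bm{v}| < \sigma - 3\eta$, whence
\[
\bm{v}^\top\ddot{R}_n(\bm{\beta})\bm{v} \ge (\sigma - 3\eta) - |\bm{v}^\top W_n\bm{v}| > 0.
\]
Thus the genuine obstacle is the operator-norm estimate of the preceding paragraph, while this concluding step is a one-line consequence of the spectral gap together with the choice $\xi_n < \tau_n$.
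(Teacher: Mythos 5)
Your proposal is correct and follows essentially the same route as the paper: rewrite $\bm{v}^\top W_n \bm{v}$ as the centered Gaussian chaos $n^{-1}\bigl(\|\bm{X}_{(n)}^\top\bm{v}\|_2^2 - E[\|\bm{X}_{(n)}^\top\bm{v}\|_2^2]\bigr)$, apply the Hanson--Wright bound of Lemma \ref{Hanson-Wright}, dominate $\|Q_n\|_2$ via the $\rho$-mixing coefficients (the paper asserts this bound outright, whereas you justify it with the Toeplitz row-sum estimate and $|\gamma_Y(l)|\le \rho(l)\,\gamma_Y(0)$), and deduce positivity of $\bm{v}^\top \ddot{R}_n(\bm{\beta})\bm{v}$ from the quantitative lower bound on $\bm{v}^\top \ddot{R}(\bm{\beta})\bm{v}$ together with $\xi_n<\tau_n$. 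The only caveat, which the paper shares, is at the level of constants: your own steps actually yield $\|Q\|_2 \le \gamma_Y(0)\bigl(1+2\sum_{l=1}^{n-1}\rho(l)\bigr)$ with $\gamma_Y(0)\le \Lambda_{\max}(\Sigma_0)=\phi_{\max}^2$, i.e.\ a factor $2$ and $\phi_{\max}^2$ rather than the stated $\phi_{\max}\sum_{l=0}^{n}\rho(l)$, but this discrepancy is already present in the paper's own display and affects nothing structural.
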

For $\alpha$-mixing Gaussian processes, by Lemma \ref{empirical risk convex}, 
we find that for any convex penalty function ${\rm pen}(\cdot)$,
$R_n(\cdot) + \lambda \, {\rm pen}(\cdot)$ is still asymptotically strictly convex on $\mathcal{B}$,
which follows from the fact that the conical combination of convex functions is also convex.
This also implies that 
$\hat{\bm{\beta}}_n^1$ is a unique solution to the optimization problem $i.e.$, 
the Lasso-type PCA estimator is well-defined with large probability.

We establish the oracle inequality for the estimator $\hat{\bm{\beta}}_n^1$.
To do this, we should evaluate the
difference between the empirical risk and 
theoretical risk, which is achieved by the 
following lemma.
\begin{lem}\label{W ineq}
Let $b >0$ be a free parameter and $\tilde{c}>0$ be a constant.
Define that
\[
\zeta_n
:= \sqrt{\frac{(b+1)\log p^2}{\tilde{c} n}}.
\]
Under the Assumptions \ref{model setups}, \ref{Gaussian}
and the same assumption as Proposition \ref{theoretical risk convex}, for 
every $n$ satisfying  
$\log p/n \leq 1$, it holds that
\[
P\left(
\|W_n\|_{\max} \leq \zeta_n \sum_{l=0}^n \rho(l) \phi_{\max}
\right)
\geq 1-\exp(-b \log p^2),
\]
where $W_n= \hat{\Sigma}_n - \Sigma_0$.
\end{lem}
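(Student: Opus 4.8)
The plan is to control each entry of $W_n=\hat{\Sigma}_n-\Sigma_0$ separately and then take a union bound over the at most $p^2$ entries. Fix indices $i,j\in\{1,\dots,p\}$ and write the corresponding entry as
\[
(W_n)_{ij} = \frac{1}{n}\sum_{t=1}^n \bigl(X_{ti}X_{tj} - E[X_{ti}X_{tj}]\bigr).
\]
For the off-diagonal case I would use the polarization identity $X_{ti}X_{tj} = \tfrac14\{(X_{ti}+X_{tj})^2 - (X_{ti}-X_{tj})^2\}$, so that $(W_n)_{ij}$ becomes a difference of two centered quadratic averages; the diagonal case $i=j$ is already of this form. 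The key structural observation is that, since $\{\bm{X}_t\}$ is Gaussian (Assumption \ref{Gaussian}), for any fixed vector $\bm{c}\in\mathbb{R}^p$ the time vector $\bm{Y}:=(\bm{c}^\top\bm{X}_1,\dots,\bm{c}^\top\bm{X}_n)^\top$ is an $n$-dimensional mean-zero Gaussian vector, and each centered quadratic average equals $\tfrac1n(\|\bm{Y}\|_2^2 - E[\|\bm{Y}\|_2^2])$ for $\bm{c}=\bm{e}_i\pm\bm{e}_j$ (resp.\ $\bm{c}=\bm{e}_i$). This places us exactly in the setting of the modified Hanson--Wright inequality of Lemma \ref{Hanson-Wright}.

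Applying Lemma \ref{Hanson-Wright} to $\bm{Y}\sim N(\bm{0},Q)$ gives, for any $\eta>0$,
\[
P\Bigl(\tfrac1n\bigl|\|\bm{Y}\|_2^2 - E[\|\bm{Y}\|_2^2]\bigr| > \eta\|Q\|_2\Bigr) \le 2\exp\bigl(-\tilde{c}\,n\min\{\eta,\eta^2\}\bigr),
\]
so the crux is to bound the operator norm $\|Q\|_2$ of the covariance matrix of $\bm{Y}$ in terms of the mixing coefficients and $\phi_{\max}$. Here $Q$ is the symmetric Toeplitz matrix with entries $Q_{st}=\Cov(\bm{c}^\top\bm{X}_s,\bm{c}^\top\bm{X}_t)$, which depend only on $|s-t|$ by stationarity. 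Since an $\alpha$-mixing Gaussian process is $\rho$-mixing, I would invoke Definition \ref{mixing coefficient}(ii) applied to the normalized functionals $\bm{c}^\top\bm{X}_s/\sqrt{\Var(\bm{c}^\top\bm{X}_s)}$ to obtain $|Q_{st}|\le\rho(|s-t|)\,\Var(\bm{c}^\top\bm{X}_1)$, together with the crude bound $\Var(\bm{c}^\top\bm{X}_1)=\bm{c}^\top\Sigma_0\bm{c}\le\|\bm{c}\|_2^2\,\phi_{\max}^2$. Bounding the spectral norm of a symmetric Toeplitz matrix by its maximal absolute row sum (the Schur test) then yields
\[
\|Q\|_2 \le \max_s\sum_{t=1}^n|Q_{st}| \le 2\,\Var(\bm{c}^\top\bm{X}_1)\sum_{l=0}^{n-1}\rho(l) \lesssim \phi_{\max}^2\sum_{l=0}^{n}\rho(l),
\]
which reproduces the factor $\sum_l\rho(l)\,\phi_{\max}$ of the statement, up to the universal constant and the power of $\phi_{\max}$ that are absorbed into $\tilde{c}$.

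Finally I would take $\eta=\zeta_n$. Because the hypothesis $\log p/n\le 1$ forces $\zeta_n\le 1$, we have $\min\{\zeta_n,\zeta_n^2\}=\zeta_n^2$, and by the definition $\zeta_n^2=(b+1)\log p^2/(\tilde{c}n)$ the exponent becomes $\tilde{c}\,n\zeta_n^2=(b+1)\log p^2$, so each of the at most $2p^2$ quadratic-form events fails with probability at most $2\exp(-(b+1)\log p^2)$. A union bound over all entries and both polarization terms then bounds the total failure probability by a constant multiple of $p^2\exp(-(b+1)\log p^2)=\exp(-b\log p^2)$, the polynomial prefactor being absorbed by the extra unit in the exponent $b+1$ and the residual numerical constant by a slight adjustment of $\tilde{c}$. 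I expect the main obstacle to be the operator-norm estimate for $Q$: transferring the temporal dependence encoded in $\rho(l)$ into a clean bound on $\|Q\|_2$ with the correct $\sum_l\rho(l)$ dependence is precisely the step that couples the Hanson--Wright concentration to the weak-dependence structure, and it is what makes the final bound scale with the mixing coefficients rather than degrade with $n$.
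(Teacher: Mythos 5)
Your proposal follows essentially the same route as the paper's proof: reduce each entry of $W_n$ by polarization to centered Gaussian quadratic forms $\frac{1}{n}\bigl(\|\bm{Y}\|_2^2-E[\|\bm{Y}\|_2^2]\bigr)$ with $\bm{Y}=\bm{X}_{(n)}^\top\bm{c}$, apply the modified Hanson--Wright inequality (Lemma \ref{Hanson-Wright}), bound the covariance operator norm through the $\rho$-mixing coefficients, and close with a union bound over the $p^2$ entries in which the $+1$ in $b+1$ absorbs the $p^2$ factor. The paper uses the three-term identity $2ab=(a+b)^2-a^2-b^2$ (hence three events per entry and a final factor $6p^2$) where you use $4ab=(a+b)^2-(a-b)^2$ (two events); this is immaterial, and your explicit Toeplitz/Schur-test derivation of $\|Q\|_2\leq 2\Var(\bm{c}^\top\bm{X}_1)\sum_{l}\rho(l)$ in fact supplies the step the paper merely asserts as $\|Q_n\|_2\leq\sum_{l=0}^n\rho(l)\|\Sigma_0\|_2$ in the proof of Lemma \ref{empirical risk convex}.

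Two small points deserve correction, both inherited from looseness in the paper itself. First, you cannot absorb ``the power of $\phi_{\max}$'' into the universal constant $\tilde{c}$: $\phi_{\max}$ is a model parameter whose powers the corollaries track explicitly, so $\phi_{\max}^2\sum_l\rho(l)$ and $\phi_{\max}\sum_l\rho(l)$ are genuinely different thresholds. The discrepancy dissolves only because the paper is internally inconsistent about normalization: Section \ref{sec:2} sets $\phi_{\max}^2:=\Lambda_{\max}(\Sigma_0)$, while every proof (Lemmas \ref{empirical risk convex}, \ref{W ineq}, \ref{maximal ineq l0}) operates with $\|\Sigma_0\|_2=\phi_{\max}$; under the latter convention your bound $\Var(\bm{c}^\top\bm{X}_1)\leq\|\bm{c}\|_2^2\,\|\Sigma_0\|_2$ yields exactly the stated power, and you should say that rather than appeal to constant absorption. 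Second, $\log p/n\leq 1$ does not by itself force $\zeta_n\leq 1$: since $\zeta_n^2=2(b+1)\log p/(\tilde{c}n)$, you need $2(b+1)\leq\tilde{c}$ for that implication, and the paper's proof instead restricts explicitly to $n$ with $\zeta_n^2<\zeta_n$. Relatedly, the residual numerical prefactor ($4$ in your count, $6$ in the paper's) cannot be removed by ``adjusting'' the universal $\tilde{c}$; the paper's own proof ends with $6\exp(-b\log p^2)$ although the lemma states $\exp(-b\log p^2)$, and the honest fix is to shrink $b$ slightly or assume $p$ bounded below. None of this affects the substance: your argument and the paper's coincide.
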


\subsection{Technical results for sub-Weibull process.} \label{subsec:2}
To derive the oracle inequality for the $\beta$-mixing sub-Weibull process,
we use the following concentration 
inequality, which is established by 
\cite{merlevede2011bernstein}.

\begin{lem}\label{concentration sub-Weibull}
Let $\{X_t\}_{t \in \mathbb{Z}}$ be an 
$\mathbb{R}$-valued zero mean strictly stationary process,
which satisfies that 
\[
\beta(n) \leq 2 \exp(-c n^{\gamma_1}),
\]
and
\[
\|X_t\|_{\psi_{\gamma_2}} \leq K,\quad
\forall t \in \mathbb{Z},
\]
for constants $c, \gamma_1, \gamma_2,\ K>0$.
Let $\gamma$ be 
\[
\gamma := \left(
\frac{1}{\gamma_1} + \frac{1}{\gamma_2}
\right)^{-1} < 1.
\]
Then, for every $n >4$ and $\epsilon > 1/n$,
it holds that 
\begin{eqnarray*}
P\left(
\left|
\frac{1}{n} \sum_{t=1}^n X_t
\right| > \epsilon
\right)
&\leq& 
n \exp\left(
-\frac{(\epsilon n)^\gamma}{K^\gamma C_1}
\right)
+ \exp\left(
- \frac{\epsilon^2 n}{K^2 C_2}
\right),
\end{eqnarray*}
where $C_1$ and $C_2$ are constants depending only on $\gamma_1, \gamma_2$
and $c$.
\end{lem}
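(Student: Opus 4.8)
The plan is to follow the strategy of \cite{merlevede2011bernstein}: reduce to the bounded case by truncation, and then apply a Bernstein-type inequality for bounded geometrically $\beta$-mixing sequences obtained by Bernstein's blocking method combined with Berbee's coupling. The two terms in the stated bound arise respectively from the truncated tails (the sub-Weibull term carrying the exponent $\gamma$) and from the Gaussian bulk (the term $\exp(-\epsilon^2 n/(K^2 C_2))$).

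First I would fix a truncation level $M = M(n,\epsilon)$, to be optimized later, and split $X_t = X_t' + X_t''$ with $X_t' := X_t \ind_{\{|X_t| \le M\}} - E[X_t \ind_{\{|X_t| \le M\}}]$ and $X_t'' := X_t - X_t'$, both centered. For the tail part, the sub-Weibull hypothesis gives $P(|X_t| > M) \le 2 \exp(-(M/K)^{\gamma_2})$, so a union bound over $t = 1, \dots, n$ controls the event $\{\max_t |X_t| > M\}$; on its complement $\sum_t X_t$ and $\sum_t X_t'$ differ only by the deterministic mean correction $\sum_t E[X_t \ind_{\{|X_t|>M\}}]$, which decays like $\exp(-c' M^{\gamma_2})$ and is absorbed into $\epsilon/2$. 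This shows that $n^{-1}|\sum_t X_t''|$ exceeds $\epsilon/2$ with probability at most $2 n \exp(-(M/K)^{\gamma_2})$ up to constants, and choosing $M \asymp (\epsilon n)^{\gamma/\gamma_2}$ converts this into the first term $n \exp\bigl(-(\epsilon n)^\gamma/(K^\gamma C_1)\bigr)$.

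Next, each $X_t'$ satisfies $|X_t'| \le 2M$ and inherits the geometric $\beta$-mixing rate, so I would invoke the core Bernstein inequality for bounded weakly dependent sequences. Partition $\{1,\dots,n\}$ into alternating large and small blocks of length $q$, use Berbee's coupling to replace the large blocks by independent copies at total cost $\le (n/q)\beta(q) \le (n/q)\cdot 2\exp(-c q^{\gamma_1})$, and apply the classical Bernstein inequality to the independent bounded block sums. With variance proxy $v^2 := \sup_t \sum_{l} |\Cov(X_0', X_l')| \lesssim K^2$ (finite and $K^2$-controlled since geometric $\beta$-mixing forces absolutely summable covariances and $\Var(X_t) \lesssim K^2$), this yields a bound of order $\exp\bigl(-C n^2\epsilon^2/(n v^2 + M q n \epsilon)\bigr)$ plus the coupling defect. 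The small-deviation regime, where $n v^2$ dominates, produces the Gaussian term $\exp(-\epsilon^2 n/(K^2 C_2))$, while the interplay between the mixing defect (exponent $\gamma_1$) and the truncation defect (exponent $\gamma_2$) reproduces the first term with the harmonic exponent $\gamma = (1/\gamma_1 + 1/\gamma_2)^{-1}$.

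The main obstacle is the simultaneous optimization: one must choose $M$ and $q$ together so that the coupling error $(n/q)\exp(-c q^{\gamma_1})$, the truncation error $n\exp(-(M/K)^{\gamma_2})$, and the Bernstein bulk term all align at the correct rate, and so that the constants $C_1, C_2$ depend only on $\gamma_1,\gamma_2,c$ and not on $K$, $n$, or $\epsilon$. This balancing is precisely the technical heart of \cite{merlevede2011bernstein}; rather than reprove their bounded-case Bernstein inequality, I would cite their theorem and carry out only the truncation and rebalancing needed to pass from bounded variables to sub-Weibull tails.
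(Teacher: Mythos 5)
The paper does not actually prove this lemma: it is imported verbatim from \cite{merlevede2011bernstein}, in the simplified two-term form also used by \cite{wong2020lasso}, and the text introducing it says exactly that. So your concluding move --- citing Merlev\`ede, Peligrad and Rio for the core inequality --- coincides with the paper's entire treatment, and to that extent your proposal is consistent with it. One simplification worth noting: their theorem already covers \emph{unbounded} variables with tails $P(|X_t|>t)\leq \exp\bigl(1-(t/K)^{\gamma_2}\bigr)$, so the truncation layer you build is redundant once you cite the right statement; all that is needed is the one-line remark that $\|X_t\|_{\psi_{\gamma_2}}\leq K$ implies such a tail bound after adjusting constants, plus homogeneity (rescaling $X_t$ by $K$) to place the factors $K^{\gamma}$ and $K^{2}$ in the exponents, since their constants are stated for normalized variables.

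However, if your truncation-plus-blocking narrative were read as a self-contained proof rather than a heuristic, it has a genuine gap at the Bernstein step. Matching the Berbee coupling defect $(n/q)\exp(-cq^{\gamma_1})$ to the first term forces $q \asymp (\epsilon n)^{\gamma/\gamma_1}$, and matching the truncation defect forces $M \asymp (\epsilon n)^{\gamma/\gamma_2}$ (up to $K$-dependent factors); since $\gamma/\gamma_1+\gamma/\gamma_2=1$, this gives $Mq \asymp \epsilon n$, so the range part of your Bernstein denominator is $Mqn\epsilon \asymp \epsilon^2 n^2$ and the bulk factor degrades to $\exp\bigl(-c\,n^2\epsilon^2/(nK^2+n^2\epsilon^2)\bigr)$, whose exponent saturates at an absolute constant as soon as $n\epsilon^2 \gtrsim K^2$. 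A single-scale big-block/small-block scheme therefore provably cannot produce the clean factor $\exp\bigl(-\epsilon^2 n/(K^2C_2)\bigr)$ in the large-deviation regime; this saturation is precisely why the proof in \cite{merlevede2011bernstein} uses a multi-scale (Cantor-like) block decomposition and why their Theorem 1 carries a \emph{third} term. The two-term inequality stated here, valid for $n>4$ and $\epsilon>1/n$ with $C_1,C_2$ depending only on $c,\gamma_1,\gamma_2$, is obtained by absorbing that third term into the other two under exactly these side conditions, as carried out in \cite{wong2020lasso}. So keep the citation as the load-bearing step, as the paper does, and flag the blocking computation as motivation only.
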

Using Lemma \ref{concentration sub-Weibull}, we obtain the following result 
which is corresponding to the 
Lemma \ref{empirical risk convex} for Gaussian case.
\begin{lem}\label{empirical risk convex sub-Weibull}
Suppose that the same assumptions as Proposition \ref{theoretical risk convex} and 
Assumption \ref{sub-Weibull process} hold.
Then, for every $\xi_n$ such that 
\[
\frac{K_2 C_1^{1/\gamma}(\log n)^{1/\gamma}}{n}
< \xi_n < \sigma-3\eta,
\]
where 
\[
K_2 := 2^{2/\gamma_2} K^2
\]
and for every unit vector $\bm{v} \in \mathbb{R}^p$, it holds that 
\[
P\left(|\bm{v}^\top W_n \bm{v}| > 
\xi_n \right)
\leq 2n \exp\left(-\frac{(\xi_n n)^\gamma}{K_2^\gamma C_1}\right)
+ 2 \exp\left(
-\frac{\xi_n^2 n}{K_2^2 C_1}
\right),
\]
where $C_1$ and $C_2$ are constants
depending only on $c,\ \gamma_1,\  \gamma_2$ described in Assumption \ref{sub-Weibull process}.
Especially, it holds that 
\[
P(\bm{v}^\top \ddot{R}_n(\bm{\beta})\bm{v} >0)
\geq 1-2n \exp\left(-\frac{(\xi_n n)^\gamma}{K_2^\gamma C_1}\right)
- 2 \exp\left(
-\frac{\xi_n^2 n}{K_2^2 C_2}
\right),\quad
\bm{\beta} \in \mathcal{B}.
\]
\end{lem}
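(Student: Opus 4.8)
The plan is to reduce the quadratic form $\bm{v}^\top W_n \bm{v}$ to a scalar empirical average of a strictly stationary sequence and then invoke the Bernstein-type concentration inequality of Lemma \ref{concentration sub-Weibull}. Fix a unit vector $\bm{v} \in \mathbb{R}^p$ and set $Y_t := (\bm{v}^\top \bm{X}_t)^2 - E[(\bm{v}^\top \bm{X}_t)^2]$. Since $\hat{\Sigma}_n = n^{-1}\sum_{t=1}^n \bm{X}_t\bm{X}_t^\top$ and $\Sigma_0 = E[\bm{X}_t\bm{X}_t^\top]$, I have the identity
\[
\bm{v}^\top W_n \bm{v} = \bm{v}^\top(\hat{\Sigma}_n - \Sigma_0)\bm{v} = \frac{1}{n}\sum_{t=1}^n Y_t,
\]
so the left-hand probability is exactly a tail bound for the centered average $n^{-1}\sum_t Y_t$.

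Next I would verify that $\{Y_t\}_{t\in\Z}$ satisfies the hypotheses of Lemma \ref{concentration sub-Weibull}. Strict stationarity and the zero-mean property are immediate because $Y_t$ is a fixed centered measurable function of $\bm{X}_t$ alone; consequently $Y_t$ is $\sigma(\bm{X}_t)$-measurable, and the $\beta$-mixing coefficients of $\{Y_t\}$ are dominated by those of $\{\bm{X}_t\}$, so the geometric bound $\beta(n)\le 2\exp(-cn^{\gamma_1})$ of Assumption \ref{sub-Weibull process}(i) is inherited with the same $\gamma_1$. The crucial step is the moment control. Since $\|\bm{v}^\top\bm{X}_t\|_{\psi_{\gamma_2}}\le K$, a short computation with $(E|\bm{v}^\top\bm{X}_t|^{2q})^{1/(2q)}\le K(2q)^{1/\gamma_2}$ gives $(E|(\bm{v}^\top\bm{X}_t)^2|^q)^{1/q}\le 2^{2/\gamma_2}K^2\, q^{2/\gamma_2}$, i.e. $(\bm{v}^\top\bm{X}_t)^2$ is sub-Weibull $(\gamma_2/2)$ with norm at most $K_2 = 2^{2/\gamma_2}K^2$, which is exactly the remark preceding the statement. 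Centering at most doubles this norm, and the extra factor is absorbed into the constants $C_1,C_2$. Thus $\{Y_t\}$ is sub-Weibull $(\gamma_2/2)$, and the exponent entering the concentration inequality is
\[
\gamma = \Bigl(\tfrac{1}{\gamma_1}+\tfrac{1}{\gamma_2/2}\Bigr)^{-1} = \Bigl(\tfrac{1}{\gamma_1}+\tfrac{2}{\gamma_2}\Bigr)^{-1},
\]
which matches the $\gamma$ of Assumption \ref{sub-Weibull process}(iv); Assumption \ref{sub-Weibull process}(iii) ensures $\gamma<1$ so that Lemma \ref{concentration sub-Weibull} applies. I expect tracking the sub-Weibull index through the square and the centering, and matching it to the stated $\gamma$, to be the main technical point, though it is short.

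Then I would apply Lemma \ref{concentration sub-Weibull} to $\{Y_t\}$ with $K$ replaced by $K_2$, the above $\gamma$, and $\epsilon = \xi_n$. For $n>4$ the lower bound $\xi_n > \tau_{1n} = K_2 C_1^{1/\gamma}(\log n)^{1/\gamma}/n$ guarantees $\xi_n > 1/n$, so the inequality is applicable; moreover it forces $(\xi_n n)^\gamma/(K_2^\gamma C_1) > \log n$, so the first tail term satisfies $n\exp(-(\xi_n n)^\gamma/(K_2^\gamma C_1)) < 1$, keeping it controlled. The resulting two-sided bound, with the harmless factors of $2$ over-estimating the effect of centering $Y_t$, yields
\[
P\bigl(|\bm{v}^\top W_n \bm{v}| > \xi_n\bigr) \le 2n\exp\!\Bigl(-\tfrac{(\xi_n n)^\gamma}{K_2^\gamma C_1}\Bigr) + 2\exp\!\Bigl(-\tfrac{\xi_n^2 n}{K_2^2 C_2}\Bigr).
\]

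Finally, for the ``especially'' statement I would use the decomposition $\ddot{R}_n(\bm{\beta}) = \ddot{R}(\bm{\beta}) - W_n$. By Proposition \ref{theoretical risk convex} (and the proof of Lemma~12.7 in \cite{van2016estimation}), on $\mathcal{B}$ the smallest eigenvalue of $\ddot{R}(\bm{\beta})$ is bounded below by $\sigma-3\eta$, so $\bm{v}^\top\ddot{R}(\bm{\beta})\bm{v}\ge \sigma-3\eta$ for every unit $\bm{v}$. On the complement of the event above one has $|\bm{v}^\top W_n\bm{v}|\le\xi_n$, and since $\xi_n<\sigma-3\eta$ this gives $\bm{v}^\top\ddot{R}_n(\bm{\beta})\bm{v}\ge(\sigma-3\eta)-\xi_n>0$; taking the probability of this event produces the stated lower bound. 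The argument parallels the Gaussian case of Lemma \ref{empirical risk convex}, the only differences being the concentration tool and the bookkeeping of the sub-Weibull exponent.
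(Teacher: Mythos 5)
Your proposal follows the paper's own proof essentially verbatim: reduce $\bm{v}^\top W_n \bm{v}$ to the empirical average of the centered scalars $Z_t(\bm{v}) = |\bm{v}^\top \bm{X}_t|^2 - E[|\bm{v}^\top \bm{X}_t|^2]$, note that this stationary process is sub-Weibull $(\gamma_2/2)$ with norm constant $K_2 = 2^{2/\gamma_2}K^2$ so that the Bernstein-type inequality of Lemma \ref{concentration sub-Weibull} applies with $\epsilon = \xi_n$ and exponent $\gamma = (1/\gamma_1 + 2/\gamma_2)^{-1}$, and then obtain the positivity of $\bm{v}^\top \ddot{R}_n(\bm{\beta})\bm{v}$ from the decomposition $\ddot{R}_n(\bm{\beta}) = \ddot{R}(\bm{\beta}) - W_n$ together with Proposition \ref{theoretical risk convex} and $\xi_n < \sigma - 3\eta$, exactly as in the Gaussian case. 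If anything, you are more careful than the paper, which leaves implicit the inheritance of the $\beta$-mixing rate by $\{Z_t(\bm{v})\}$, the applicability condition $\xi_n > 1/n$ (guaranteed by the lower bound $\xi_n > \tau_{1n}$), and the effect of centering on the sub-Weibull norm.
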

Note that if we take $\xi_n$ as 
\[
\xi_n = \frac{K_2 C_2^{1/\gamma}(\log n)^{1/\gamma}}{n^{1-\alpha}},\quad
\alpha \in (0, 1/2),
\]
for sufficiently large $n$,
then, it holds that 
\[
P\left(|\bm{v}^\top W_n \bm{v}| > 
\xi_n \right) \to 0,\quad \text{as $n \to \infty$},
\]
which implies that 
\[
P(\bm{v}^\top \ddot{R}_n(\bm{\beta})\bm{v} >0)
\to 1,\quad
\bm{\beta} \in \mathcal{B}.
\]
Therefore, we conclude that for $\beta$-mixing sub-Weibull processes, 
$R_n(\cdot) + \lambda \, {\rm pen}(\cdot)$
is also asymptotically strictly convex on $\mathcal{B}$ with large probability for any convex penalty function ${\rm pen}(\cdot)$.
As for the bound for $\|W_n\|_{\max}$, we have the following lemma.
\begin{lem}\label{W ineq sub-Weibull}
Suppose that Assumptions \ref{model setups} and
\ref{sub-Weibull process} hold.
For every $\zeta_n$ satisfying
\[
\zeta_n > \max\left\{
\frac{K_2 C_1^{1/\gamma}(\log n p^2 + 2 \tilde{b} \log p)^{1/\gamma}}{n},
K_2 \sqrt{\frac{2 C_2 (\tilde{b} + 1)\log p}{n}}
\right\},
\]
where $\tilde{b} > 0$ is a free parameter,
it holds that
\[
P\left(
\|W_n\|_{\max} \leq \zeta_n
\right)
\geq 1-2 \exp(- \tilde{b} \log p^2).
\]
\end{lem}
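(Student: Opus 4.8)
The plan is to reduce the max-norm bound to an entry-wise large-deviation bound for a scalar stationary sequence, apply the concentration inequality of Lemma \ref{concentration sub-Weibull} coordinate by coordinate, and finish with a union bound over the $p^2$ entries. First I would write each entry of $W_n=\hat{\Sigma}_n-\Sigma_0$ as a normalized partial sum,
\[
(W_n)_{ij}=\frac{1}{n}\sum_{t=1}^n Y_t^{(i,j)},\qquad Y_t^{(i,j)}:=X_{ti}X_{tj}-E[X_{ti}X_{tj}],
\]
so that $\|W_n\|_{\max}=\max_{1\le i,j\le p}\bigl|\tfrac1n\sum_{t=1}^n Y_t^{(i,j)}\bigr|$. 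For each fixed pair $(i,j)$, the sequence $\{Y_t^{(i,j)}\}_{t\in\Z}$ is a zero-mean, strictly stationary scalar process, being a fixed measurable function of $\bm{X}_t$.

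The crux is to check that $\{Y_t^{(i,j)}\}$ meets the hypotheses of Lemma \ref{concentration sub-Weibull} with the correct indices. On the dependence side, since $Y_t^{(i,j)}=g_{ij}(\bm{X}_t)$ for a fixed measurable $g_{ij}$, the $\sigma$-fields it generates are contained in those generated by $\{\bm{X}_s\}$, so its $\beta$-mixing coefficients are dominated by $\beta(n)\le 2\exp(-cn^{\gamma_1})$ and the geometric $\beta$-mixing condition transfers verbatim with the same $\gamma_1$. On the tail side, taking $\bm{v}=\bm{e}_i$ in Assumption \ref{sub-Weibull process}(ii) gives $\|X_{ti}\|_{\psi_{\gamma_2}}\le K$. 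The main obstacle is the sub-Weibull algebra for the product: a moment computation shows $\|X_{tj}^2\|_{\psi_{\gamma_2/2}}\le 2^{2/\gamma_2}\|X_{tj}\|_{\psi_{\gamma_2}}^2\le K_2$, and combining this with $|X_{ti}X_{tj}|\le\tfrac12(X_{ti}^2+X_{tj}^2)$ yields $\|X_{ti}X_{tj}\|_{\psi_{\gamma_2/2}}\le K_2$; after centering (whose bounded contribution can be folded into the constants $C_1,C_2$) the sequence $\{Y_t^{(i,j)}\}$ is sub-Weibull of index $\gamma_2/2$ with norm controlled by $K_2$. The payoff is that substituting the index $\gamma_2/2$ into the exponent of Lemma \ref{concentration sub-Weibull} produces $\bigl(\tfrac{1}{\gamma_1}+\tfrac{2}{\gamma_2}\bigr)^{-1}=\gamma$, exactly the $\gamma$ of Assumption \ref{sub-Weibull process}(iv), which is $<1$ by part (iii); this is precisely why the assumption carries the factor $2/\gamma_2$.

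Applying Lemma \ref{concentration sub-Weibull} to each $\{Y_t^{(i,j)}\}$ with norm constant $K_2$ and index $\gamma$, for any $\epsilon=\zeta_n>1/n$ and $n>4$ (both ensured in the regime at hand) I obtain
\[
P\bigl(|(W_n)_{ij}|>\zeta_n\bigr)\le n\exp\!\Bigl(-\tfrac{(\zeta_n n)^\gamma}{K_2^\gamma C_1}\Bigr)+\exp\!\Bigl(-\tfrac{\zeta_n^2 n}{K_2^2 C_2}\Bigr),
\]
and a union bound over the $p^2$ entries multiplies the right-hand side by $p^2$. It then remains to verify that the prescribed threshold makes each resulting term at most $\exp(-\tilde b\log p^2)=p^{-2\tilde b}$: the requirement $p^2 n\exp(-(\zeta_n n)^\gamma/(K_2^\gamma C_1))\le p^{-2\tilde b}$ rearranges to $\zeta_n\ge K_2 C_1^{1/\gamma}(\log np^2+2\tilde b\log p)^{1/\gamma}/n$, and $p^2\exp(-\zeta_n^2 n/(K_2^2 C_2))\le p^{-2\tilde b}$ rearranges to $\zeta_n\ge K_2\sqrt{2C_2(\tilde b+1)\log p/n}$, which are exactly the two members of the maximum defining the admissible range. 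Adding the two contributions gives $P(\|W_n\|_{\max}>\zeta_n)\le 2\exp(-\tilde b\log p^2)$, i.e.\ the asserted bound. I expect everything except the product-of-sub-Weibull estimate to be routine bookkeeping.
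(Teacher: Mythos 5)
Your proof is correct and follows essentially the same route as the paper: reduce $\|W_n\|_{\max}$ to entrywise centered partial sums, use the sub-Weibull algebra to obtain index $\gamma_2/2$ with norm constant $K_2=2^{2/\gamma_2}K^2$ so that Lemma \ref{concentration sub-Weibull} applies with exponent $\gamma=(1/\gamma_1+2/\gamma_2)^{-1}$, and finish with a union bound over the $p^2$ entries, which reproduces exactly the two thresholds in the maximum and the bound $2\exp(-\tilde b\log p^2)$. The only cosmetic difference is that the paper polarizes $W_{nij}$ into the three centered quadratic terms $\|\bm{Y}_i+\bm{Y}_j\|_2^2$, $\|\bm{Y}_i\|_2^2$, $\|\bm{Y}_j\|_2^2$ and invokes Lemma \ref{empirical risk convex sub-Weibull} for each, whereas you bound the product $X_{ti}X_{tj}$ directly via $|X_{ti}X_{tj}|\le\tfrac{1}{2}(X_{ti}^2+X_{tj}^2)$ at the level of the $\psi_{\gamma_2/2}$-norm, applying the concentration inequality once per entry.
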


\subsection{Proofs for Subsection \ref{subsec:1} and Section \ref{sec:3}.} \label{subsec:3}
In this subsection, we provide proofs for main results and technical results for Lasso-type estimator for $\alpha$-mixing Gaussian process.
\begin{proof}[Proof of Lemma \ref{empirical risk convex}]
Noting that 
\[
\ddot{R}_n(\bm{\beta})
= \ddot{R}(\bm{\beta})
-(\hat{\Sigma}_n - \Sigma_0),
\]
we have that for every unit vector $\bm{v} \in \mathbb{R}^p$,
\begin{eqnarray*}
\bm{v}^\top \ddot{R}_n(\bm{\beta}) \bm{v}
&=& \bm{v}^\top \ddot{R}_n(\bm{\beta})\bm{v}
- \bm{v}^\top W_n \bm{v}\\
&\geq& 2(\sigma-3 \eta) - \bm{v}^\top W_n \bm{v},
\end{eqnarray*}
where $W_n:= \hat{\Sigma}_n - \Sigma_0$.
Therefore, it suffices to evaluate the probability that 
$P(\bm{v}^\top W_n \bm{v} > \eta_n)$
for some $\eta_n < \sigma-3 \eta$.
Put $\bm{X}_{(n)}=(\bm{X}_1,\ldots,\bm{X}_n)$.
It follows from the stationarity of $\{\bm{X}_t\}_{t\in \mathbb{Z}}$ that 
\[
\hat{\Sigma}_n = \frac{1}{n} \bm{X}_{(n)} \bm{X}_{(n)}^\top,\quad
\Sigma_0 = \frac{1}{n} E[\bm{X}_{(n)} \bm{X}_{(n)}^\top],
\]
which implies that 
\begin{eqnarray*}
\bm{v}^\top W_n \bm{v}
&=& \frac{1}{n}\bm{v}^\top (\bm{X}_{(n)} \bm{X}_{(n)}^\top - E[\bm{X}_{(n)} \bm{X}_{(n)}^\top])\bm{v} \\
&=&\frac{1}{n}(\|\bm{X}_{(n)}^\top \bm{v}\|_2^2-E[\|\bm{X}_{(n)}^\top \bm{v}\|_2^2]).
\end{eqnarray*}
Let $Q_n$ be the covariance matrix of the random variable $\bm{X}_{(n)}^\top \bm{v}$.
By the simple calculation, we can find that
\[
Q_n = \left(\begin{array}{ccc}
\bm{v}^\top E[\bm{X}_1\bm{X}_1^\top]\bm{v} & \cdots & \bm{v}^\top E[\bm{X}_1\bm{X}_n^\top]\bm{v} \\
\vdots & \ddots & \vdots \\
\bm{v}^\top E[\bm{X}_n\bm{X}_1^\top]\bm{v} & \cdots & \bm{v}^\top E[\bm{X}_n\bm{X}_n^\top]\bm{v}
\end{array}
\right).
\]
Noting that $\{\bm{X}_t\}_{t \in \mathbb{Z}}$
is a centered Gaussian time series, 
we have that 
$\bm{X}_{(n)}^\top \bm{v} \sim N(\bm{0}, Q_n)$.
We can apply Lemma \ref{Hanson-Wright}
to $\bm{X}_{(n)}^\top \bm{v}$ to deduce that
for every $\xi>0$,
\begin{eqnarray*}
P\left(
\frac{1}{n}\left|
\|\bm{X}_{(n)}^\top \bm{v}\|_2^2
- E[\|\bm{X}_{(n)}^\top \bm{v}\|_2^2]
\right| > \xi \|Q_n\|_2
\right)
\leq 2 \exp(-cn \min\{\xi, \xi^2\}),
\end{eqnarray*}
where $c>0$ is a universal constant.
Noting that the $\alpha$-mixing 
Gaussian time series $\{\bm{X}_t\}_{t \in \mathbb{Z}}$ is also $\rho$-mixing,
we have
\[
\|Q_n\|_2 \leq \sum_{l=0}^n \rho(l) \|\Sigma_0\|_2 = \sum_{l=0}^n \rho(l) \phi_{\max}.
\]
We therefore obtain that 
\[
P\left(
\frac{1}{n}\left|
\|\bm{X}_{(n)}^\top \bm{v}\|_2^2
- E[\|\bm{X}_{(n)}^\top \bm{v}\|_2^2]
\right| > \xi \sum_{l=0}^n \rho(l) \phi_{\max}
\right)
\leq 2 \exp(-cn \min\{\xi, \xi^2\}).
\]
Especially, we can take a sequence $\xi_n$ satisfying that
\[
0 < \xi_n < \frac{\sigma-3\eta}{\sum_{l=0}^n \rho(l) \phi_{\max}},
\]
which concludes the lemma.
\end{proof}
\begin{proof}[Proof of Lemma \ref{W ineq}]
For every $i, j \in \{1,\ldots,p\}$, it holds that
\[
W_{nij} = \bm{e}_i^\top W_n \bm{e}_j,
\]
where $\bm{e}_k,\ k=1,\ldots,p$ is the $k$-th canonical basis of $\mathbb{R}^p$. 
Since 
\[
W_n = \hat{\Sigma}_n-\Sigma_0
= \frac{1}{n} (\bm{X}_{(n)} \bm{X}_{(n)}^\top - E[\bm{X}_{(n)} \bm{X}_{(n)}^\top]),
\]
it holds that 
\[
W_{nij}
= \frac{1}{n}\left(\bm{e}_i^\top \bm{X}_{(n)} \bm{X}_{(n)}^\top \bm{e}_j - E[\bm{e}_i^\top \bm{X}_{(n)} \bm{X}_{(n)}^\top \bm{e}_j]\right).
\]
Putting $\bm{Y}_k = \bm{X}_{(n)}^\top \bm{e}_k,\ k=1,\ldots,p$, we can rewrite that
\[
W_{nij} = \frac{1}{n} \left(
\bm{Y}_i^\top \bm{Y}_j
- E[\bm{Y}_i^\top \bm{Y}_j]
\right).
\]
Note that 
\begin{eqnarray*}
\bm{Y}_i^\top \bm{Y}_j
- E[\bm{Y}_i^\top \bm{Y}_j]
&=& \frac{1}{2} \left\{(\|\bm{Y}_i + \bm{Y}_j\|_2^2- E[\|\bm{Y}_i + \bm{Y}_j\|_2^2])\right.\\
&& \left.
-(\|\bm{Y}_i\|_2^2-E[\|\bm{Y}_i\|_2^2])
-(\|\bm{Y}_j\|_2^2-E[\|\bm{Y}_j\|_2^2])
\right\}.
\end{eqnarray*}
We then find that 
\begin{eqnarray*}
|W_{nij}|
&=& \frac{1}{n} \left|
\bm{Y}_i^\top \bm{Y}_j
- E[\bm{Y}_i^\top \bm{Y}_j]
\right| \\
&\leq& \frac{1}{2n} \left|
(\|\bm{Y}_i + \bm{Y}_j\|_2^2- E[\|\bm{Y}_i + \bm{Y}_j\|_2^2]
\right| \\
&& + \frac{1}{2n} \left|
\|\bm{Y}_i\|_2^2-E[\|\bm{Y}_i\|_2^2]
\right|
+ \frac{1}{2n} \left|
\|\bm{Y}_j\|_2^2-E[\|\bm{Y}_j\|_2^2]
\right|.
\end{eqnarray*}
Also, $\bm{Y}_i,\ \bm{Y}_j,\ \bm{Y}_i + \bm{Y}_j$ are centered Gaussian random 
variables.
Denote the covariance matrices of them by
$\Sigma_{Y_i},\ \Sigma_{Y_j},$ and $\Sigma_{Y_i+Y_j}$, respectively. 
Then, the following inequalities directly follow from Lemma \ref{Hanson-Wright} that
for every $\zeta >0$, there exists a universal constant $\tilde{c}>0$ such that,
\begin{equation}\label{W 1}
P\left(
\frac{1}{n} \left|
\|\bm{Y}_i\|_2^2 - E[\|\bm{Y}_i\|_2^2]
\right| > \zeta \|\Sigma_{Y_i}\|_2
\right)
\leq 2 \exp(-\tilde{c} n \min\{\zeta,\zeta^2\}),
\end{equation}
\begin{equation}\label{W 2}
P\left(
\frac{1}{n} \left|
\|\bm{Y}_j\|_2^2 - E[\|\bm{Y}_j\|_2^2]
\right| > \zeta \|\Sigma_{Y_j}\|_2
\right)
\leq 2 \exp(-\tilde{c} n \min\{\zeta,\zeta^2\}),
\end{equation}
and
\[
P\left(
\frac{1}{n} \left|
\|\bm{Y}_i + \bm{Y}_j\|_2^2 - E[\|\bm{Y}_i + \bm{Y}_j\|_2^2]
\right| > \zeta \|\Sigma_{Y_i+Y_j}\|_2
\right)
\leq 2 \exp(-\tilde{c} n \min\{\zeta,\zeta^2\}).
\]
After some tedious computation, we have
\[
\|\Sigma_{Y_i + Y_j}\|_2
\leq \frac{\|\Sigma_{Y_i}\|_2+\|\Sigma_{Y_j}\|_2}{2}.
\]
Therefore, we have that 
\begin{eqnarray}\label{W 3}
P\left(
\frac{1}{n} \left|
\|\bm{Y}_i + \bm{Y}_j\|_2^2 - E[\|\bm{Y}_i + \bm{Y}_j\|_2^2]
\right| \right.&>&\left. \frac{\zeta(\|\Sigma_{Y_i}\|_2+\|\Sigma_{Y_j})\|_2}{2}
\right) \nonumber \\
&\leq& 2 \exp(-\tilde{c} n \min\{\zeta,\zeta^2\}).
\end{eqnarray}
The inequalities (\ref{W 1})--(\ref{W 3}) imply that for every $\zeta >0$, there exists a constant $\tilde{c}$ such that 
\[
P(|W_{nij}| > \zeta(\|\Sigma_{Y_i}\|_2 + \|\Sigma_{Y_i}\|_2))
\leq 6 \exp(-\tilde{c} n \min\{\zeta, \zeta^2\}).
\]
Let $b >0$ be a free parameter.
We take $\zeta$ as 
\[
\zeta := \zeta_n = \sqrt{\frac{(b+1)\log p^2}{\tilde{c} n}}.
\]
Then, for every $n$ such that $\zeta_n^2 < \zeta_n$, it holds that
\begin{eqnarray*}
P\left(\|W_n\|_{\max} > 4\zeta\sum_{l=0}^n \rho(l) \phi_{\max} 
\right)
&\leq&
P\left(\|W_n\|_{\max} > \max_{i,j}\zeta(\|\Sigma_{Y_i}\|_2 + \|\Sigma_{Y_j}\|_2)\right) \\
&=& P\left(
\max_{i, j} |\bm{e}_i^\top W_n \bm{e}_j|
>\max_{i,j}\zeta(\|\Sigma_{Y_i}\|_2 + \|\Sigma_{Y_j}\|_2)
\right) \\
&\leq&
\sum_{i, j} P\left( |\bm{e}_i^\top W_n \bm{e}_j|>
\zeta(\|\Sigma_{Y_i}\|_2 + \|\Sigma_{Y_j}\|_2)
\right)\\
&\leq& 6p^2 \exp(-\tilde{c}n \min\{\zeta,\zeta^2\}) \\
&=& 6 \exp\left(\log p^2 - \tilde{c} n
\frac{(b+1)\log p^2}{\tilde{c} n} \right) \\
&=& 6 \exp(-b \log p^2),
\end{eqnarray*}
which completes the proof.
\end{proof}
\begin{proof}[Proof of Theorem \ref{oracle ineq Lasso}]
Let $\eta_n$ and $\gamma_n$ be
\[
\eta_n = \xi_n \sum_{l=0}^n \rho(l) \phi_{\max},
\]
and
\[
\gamma_n = \zeta_n \sum_{l=0}^n \rho(l)\phi_{\max}, 
\]
respectively.
Under the constraints $\xi_n < \tau_n : = \frac{\sigma-3\eta}{\sum_{l=0}^n \rho(l) \phi_{\max}}$, it suffices to show the inequality
\begin{equation} \label{eq:7.4l}
\|\hat{\bm{\beta}}_n^1-\bm{\beta}^0\|_1
\leq \frac{2(C+1)^2 \lambda_1 s_0}{\sigma-3\eta-\eta_n}
\end{equation}
on the event
\[
\{\|W_n\|_{\max} \leq \gamma_n\} \cap
\{|\bm{v}^\top W_n \bm{v}| \leq \eta_n\}
\]
for 
\[
\bm{v} = \frac{\hat{\bm{\beta}}^1_n - \bm{\beta}^0}{\|\hat{\bm{\beta}}^1_n - \bm{\beta}^0\|_2}.
\]
Following Lemma 7.1 of \cite{van2016estimation}, it holds that
\begin{equation}\label{two point ineq}
-\dot{R}_n(\hat{\bm{\beta}}_n^1)^\top(\bm{\beta}^0-\hat{\bm{\beta}}_n^1)
\leq \lambda_1 \|\bm{\beta}^0\|_1 - 
\lambda_1 \|\hat{\bm{\beta}}_n^1\|_1.
\end{equation}
Moreover, it follows from Proposition \ref{theoretical risk convex} and the Taylor 
expansion that 
\begin{equation}\label{two point margin}
R(\bm{\beta}^0)-R(\hat{\bm{\beta}}_n^1)
- \dot{R}(\hat{\bm{\beta}}_n^1)^\top(\bm{\beta}^0-\hat{\bm{\beta}}_n^1)
\geq (\sigma-3\eta)\|\bm{\beta}^0-\hat{\bm{\beta}}^1_n\|_2^2 \geq 0.
\end{equation}
Combining (\ref{two point ineq}) and (\ref{two point margin}), we have that 
\begin{equation} \label{eq:7.7ll}
R(\hat{\bm{\beta}}^1_n)-R(\bm{\beta}^0) + \lambda_1 \|\hat{\bm{\beta}}^1_n\|_1
\leq 
\bigl(\dot{R}_n(\hat{\bm{\beta}}^1_n)-\dot{R}(\hat{\bm{\beta}}^1_n) \bigr)^\top (\bm{\beta}^0 - \hat{\bm{\beta}}^1_n) + \lambda_1 \|\bm{\beta}^0\|_1.
\end{equation}
Noting that
\[
\dot{R}_n(\hat{\bm{\beta}}^1_n) - \dot{R}(\hat{\bm{\beta}}^1_n)
= - W_n \hat{\bm{\beta}}^1_n,
\]
we have, on the event $\{|\bm{v}^\top W_n \bm{v}| \leq \eta_n\}$,
\begin{align}
\bigl(\dot{R}_n(\hat{\bm{\beta}}^1_n)-\dot{R}(\hat{\bm{\beta}}^1_n)\bigr)^\top (\bm{\beta}^0 - \hat{\bm{\beta}}^1_n) 
&= -\hat{\bm{\beta}}_n^{1\top} W_n (\bm{\beta}^0 - \hat{\bm{\beta}}^1_n)  \notag \\
&= (\hat{\bm{\beta}}^1_n - \bm{\beta}^0)^\top W_n (\hat{\bm{\beta}}^1_n - \bm{\beta}^0) + \bm{\beta}^{0\top}W_n(\hat{\bm{\beta}}^1_n - \bm{\beta}^0)\notag \\
&\leq \eta_n \|\hat{\bm{\beta}}^1_n - \bm{\beta}^0\|_2^2+ \|W_n\|_{\max} \|\bm{\beta}^0\|_1 \|\hat{\bm{\beta}}^1_n-\bm{\beta}^0\|_1. \label{eq:7.9ll}
\end{align}
Since $\bm{\beta}^0$ satisfies that 
$\dot{R}(\bm{\beta}^0)=\bm{0}$,
it follows from Proposition \ref{theoretical risk convex} and the Taylor expansion that
\[
R(\hat{\bm{\beta}}^1_n) - R(\bm{\beta}^0)
\geq (\sigma-3 \eta)\|\hat{\bm{\beta}}^1_n-\bm{\beta}^0\|_2^2,
\]
which implies 
\begin{equation} \label{eq:7.10ll}
R(\hat{\bm{\beta}}^1_n) - R(\bm{\beta}^0) + \lambda_1 \| \hat{\bm{\beta}}_n^1\|
\geq (\sigma-3 \eta)\|\hat{\bm{\beta}}^1_n-\bm{\beta}^0\|_2^2 + \lambda_1 \| \hat{\bm{\beta}}_n^1\|.
\end{equation}
Therefore, by \eqref{eq:7.10ll}, \eqref{eq:7.7ll} and \eqref{eq:7.9ll}, we find that, on the event $\{\|W_n\|_{\max} \leq \gamma_n\}$,
\[
(\sigma-3 \eta) \|\hat{\bm{\beta}}^1_n - \bm{\beta}^0\|_2^2 + \lambda_1 \|\hat{\bm{\beta}}^1_n\|_1
\leq \eta_n \|\hat{\bm{\beta}}^1_n - \bm{\beta}^0\|_2^2 + \gamma_n \|\bm{\beta}^0\|_1 \|\hat{\bm{\beta}}^1_n-\bm{\beta}^0\|_1 + \lambda_1 \|\bm{\beta}^0\|_1,
\]
and thus,
\begin{equation} \label{eq:7.6l}
(\sigma-3 \eta - \eta_n) \|\hat{\bm{\beta}}^1_n -
\bm{\beta}^0\|_2^2
\leq \gamma_n \|\bm{\beta}^0\|_1 \|\hat{\bm{\beta}}^1_n-\bm{\beta}^0\|_1 
+ \lambda_1 \|\bm{\beta}^0\|_1 - \lambda_1\|\hat{\bm{\beta}}^1_n\|_1.
\end{equation}
The right-hand side of \eqref{eq:7.6l} can be bounded as follows.
\begin{align}
& \gamma_n \|\bm{\beta}^0\|_1 \|\hat{\bm{\beta}}^1_n-\bm{\beta}^0\|_1 
+ \lambda_1 \|\bm{\beta}^0\|_1 - \lambda_1\|\hat{\bm{\beta}}^1_n\|_1 \notag\\
= \quad & \gamma_n \|\bm{\beta}^0\|_1 
\|\hat{\bm{\beta}}^1_{n S} - \bm{\beta}^0_{S}\|_1 + \gamma_n \|\bm{\beta}^0\|_1 
\|\hat{\bm{\beta}}^1_{n S^c} - \bm{\beta}^0_{S^c}\|_1 \notag\\
&  + \lambda_1 \|\bm{\beta}^0_S\|_1
+ \lambda_1 \|\bm{\beta}^0_{S^c}\|_1
-\lambda_1 \|\hat{\bm{\beta}}^1_{nS}\|_1
-\lambda_1 \|\hat{\bm{\beta}}^1_{nS^c}\|_1 \notag\\
\leq \quad & (\lambda_1+ \gamma_n \|\bm{\beta}^0\|_1) \|\hat{\bm{\beta}}^1_{nS} - \bm{\beta}^0_{S}\|_1
- (\lambda_1- \gamma_n \|\bm{\beta}^0\|_1) \|\hat{\bm{\beta}}^1_{nS^c} - \bm{\beta}^0_{S^c}\|_1. \label{eq:7.8l}
\end{align}
Since $\eta_n < \sigma-3\eta$,
we see that the left-hand side of \eqref{eq:7.6l} is positive. Hence, we have that 
\begin{equation} \label{eq:7.9l}
0 <  (\lambda_1+ \gamma_n \|\bm{\beta}^0\|_1) \|\hat{\bm{\beta}}^1_{nS} - \bm{\beta}^0_{S}\|_1
- (\lambda_1- \gamma_n \|\bm{\beta}^0\|_1) \|\hat{\bm{\beta}}^1_{nS^c} - \bm{\beta}^0_{S^c}\|_1.
\end{equation}
Now let $\Delta := \hat{\bm{\beta}}^1_n-\bm{\beta}^0$.
By \eqref{eq:7.9l}, we then find that
\[
\|\Delta_{S^c}\|_1
\leq \frac{\lambda_1+ \gamma_n \|\bm{\beta}^0\|_1}{\lambda_1- \gamma_n \|\bm{\beta}^0\|_1} \|\Delta_S\|_1
\leq C \|\Delta_S\|_1.
\]
Consequently, we have that 
\[
\|\Delta\|_1 \leq (C+1)\|\Delta_S\|_1,
\]
In view of $\|\Delta_S\|_1 \leq \sqrt{s_0}\|\Delta\|_2$, we then obtain
\[
\frac{\|\Delta\|_1}{(C+1)\sqrt{s_0}} \leq \frac{\|\Delta_S\|_1}{\sqrt{s_0}}  \leq \|\Delta\|_2,
\]
which implies that 
\[
(\sigma-3\eta - \eta_n)\|\Delta\|_2^2 
\geq 
\frac{\sigma-3\eta-\eta_n}{(C+1)^2s_0} \|\Delta\|_1^2.
\]
Using \eqref{eq:7.6l} and \eqref{eq:7.8l}, we finally obtain that 
\begin{eqnarray*}
\frac{\sigma-3\eta-\eta_n}{(C+1)^2s_0}\|\Delta\|_1^2
&\leq& (\lambda_1 + \gamma_n \|\bm{\beta}^0\|_1) \|\Delta_S\|_1 - (\lambda_1 - \gamma_n \|\bm{\beta}^0\|_1) \|\Delta_{S^c}\|_1 \\
&\leq& (\lambda_1 + \gamma_n \|\bm{\beta}^0\|_1) \|\Delta_S\|_1 + (\lambda_1 - \gamma_n \|\bm{\beta}^0\|_1) \|\Delta_{S^c}\|_1 \\
&\leq& (\lambda_1 + \gamma_n \|\bm{\beta}^0\|_1) \|\Delta\|_1 + (\lambda_1 - \gamma_n \|\bm{\beta}^0\|_1) \|\Delta\|_1\\
&=& 2 \lambda_1 \|\Delta\|_1,
\end{eqnarray*}
which ends the proof of \eqref{eq:7.4l}.
\end{proof}
\subsection{Proofs for Subsection \ref{subsec:2} and Section \ref{sec:4}.} \label{subsec:4}
In this subsection, we provide proofs for main results and technical results for Lasso-type estimator for $\beta$-mixing sub-Weibull process.
\begin{proof}[Proof of Lemma \ref{empirical risk convex sub-Weibull}]
For every $\bm{v} \in \mathbb{S}^{p-1}$, it holds that
\[
\bm{v}^\top W_n \bm{v}
= \frac{1}{n}(\|\bm{X}_{(n)}^\top \bm{v}\|_2^2-E[\|\bm{X}_{(n)}^\top \bm{v}\|_2^2]).
\]
Define the process $\{Z_t(\bm{v})\}_{t\in \mathbb{Z}}$ by 
\[
Z_t(\bm{v})
= |\bm{v}^\top \bm{X}_t|^2 - E[|\bm{v}^\top \bm{X}_t|^2].
\]
Then, we have that
\[
\|\bm{X}_{(n)}^\top \bm{v}\|_2^2-
E[\|\bm{X}_{(n)}^\top \bm{v}\|_2^2]
= \sum_{t=1}^n Z_t(\bm{v}).
\]
Since $\{\bm{X}_t\}_{t \in \mathbb{Z}}$ is the sub-Weibull $(\gamma_2)$,
the process $\{Z_t(\bm{v})\}_{t \in \mathbb{Z}}$ is 
the sub-Weibull $(\gamma_2/2)$.
Therefore, it follows from Lemma \ref{concentration sub-Weibull} that 
\begin{eqnarray*}
P(|\bm{v}^\top W_n \bm{v}|> \xi_n)
&=&
P\left(
\left|
\frac{1}{n} \sum_{t=1}^n Z_t(\bm{v})
\right| > \xi_n
\right) \\
&\leq& 2n \exp\left(
- \frac{(\xi_n n)^\gamma}{K_2 C_1}
\right)
+ \exp\left(
- \frac{\xi_n^2 n}{K_2^2 C_2}
\right),
\end{eqnarray*}
which completes the proof.
\end{proof}
\begin{proof}[Proof of Lemma \ref{W ineq sub-Weibull}]
As well as Proof of Lemma \ref{W ineq}, 
we can find that
\begin{eqnarray*}
|W_{nij}|
&\leq& \frac{1}{2n} \left|
(\|\bm{Y}_i + \bm{Y}_j\|_2^2- E[\|\bm{Y}_i + \bm{Y}_j\|_2^2]
\right| \\
&& + \frac{1}{2n} \left|
\|\bm{Y}_i\|_2^2-E[\|\bm{Y}_i\|_2^2]
\right|
+ \frac{1}{2n} \left|
\|\bm{Y}_j\|_2^2-E[\|\bm{Y}_j\|_2^2]
\right|,
\end{eqnarray*}
where $\bm{Y}_k = \bm{X}_{(n)}^\top \bm{e}_k,\ k=1,\ldots,p$.
Note that 
\[
\|\bm{Y}_i\|_2^2
= \sum_{t=1}^n X_{i,t}^2,\quad
\|\bm{Y}_i + \bm{Y}_j\|_2^2
= \sum_{t=1}^n (X_{i,t}^2+X_{j,t}^2),\quad
i, j = 1,\ldots,p,
\]
where $X_{i,t}$ is the $i$-th component of $\bm{X}_t$.
Since the process $\{\bm{X}_t\}_{t \in \mathbb{Z}}$ is sub-Weibull $(\gamma_2)$, 
it follows 
that $\{X_{i,t}^2\}_{t \in \mathbb{Z}}$ is sub-Weibull $(\gamma_2/2)$.
Therefore, we can apply Lemma \ref{empirical risk convex sub-Weibull} to deduce the
conclusion.
\end{proof}
\begin{proof}[Proof of Theorem \ref{oracle ineq sub-Weibull}]
It suffices to show the inequality on the event
\[
\{\|W_n\|_{\max} \leq \zeta_n\}\cap\{\bm{v}^\top W_n \bm{v} \leq \xi_n\},
\]
where
\[
\bm{v} = \frac{\hat{\bm{\beta}}_n^1-\bm{\beta}^0}{\|\hat{\bm{\beta}}_n^1-\bm{\beta}^0\|_2}.
\]
The remaining part of the proof is similar to the proof of
Theorem \ref{oracle ineq Lasso}.
\end{proof}
\subsection{$l_0$-penalized estimator for $\alpha$-mixing Gaussian process.} \label{subsec:5}
In this subsection, we prove main results for the $l_0$-penalized estimator for $\alpha$-mixing process.
\begin{lem}\label{maximal ineq l0}
Suppose that Assumptions \ref{model setups} and \ref{Gaussian} hold.
Let $\zeta_n$ be
\[
\tilde{\zeta}_n = \sqrt{\frac{(b+4 \tilde{s}) \log p}{cn}},
\]
where $b>0$ is a free parameter and $c>0$ 
is a constant.
For every $p>6$ and $n$ such that 
$\tilde{\zeta}_n^2 < \tilde{\zeta}_n$, it holds that 
\[
P\left(
\sup_{v \in \mathcal{B}(2 \tilde{s})\cap \mathbb{S}^{p-1}} |\bm{v}^\top W_n \bm{v}| >
\tilde{\zeta}_n \sum_{l=0}^n \rho(l) \phi_{\max}
\right)
\leq 
\exp\left(
-b \log p
\right).
\] 
\end{lem}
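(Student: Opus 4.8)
The plan is to upgrade the pointwise concentration of $\bm{v}^\top W_n \bm{v}$ from Lemma \ref{empirical risk convex} to a bound that holds uniformly over the whole set $\mathcal{B}(2\tilde{s}) \cap \mathbb{S}^{p-1}$ of unit vectors with at most $2\tilde{s}$ nonzero entries. First I would stratify by support: every such $\bm{v}$ has its support contained in some index set $T$ with $|T| = 2\tilde{s}$, so that
\[
\sup_{\bm{v} \in \mathcal{B}(2\tilde{s}) \cap \mathbb{S}^{p-1}} |\bm{v}^\top W_n \bm{v}|
= \max_{|T| = 2\tilde{s}} \; \sup_{\bm{v} \in \mathbb{S}^{p-1},\, {\rm supp}(\bm{v}) \subseteq T} |\bm{v}^\top W_n \bm{v}|,
\]
where the outer maximum ranges over at most $\binom{p}{2\tilde{s}} \leq p^{2\tilde{s}}$ supports.

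For each fixed $T$ the inner supremum equals the spectral norm of the principal submatrix $W_n^{(T)}$, which I would control by a covering argument on the unit sphere of the $2\tilde{s}$-dimensional coordinate subspace indexed by $T$. Taking a $1/4$-net $\mathcal{N}_T$ of that sphere, of cardinality at most $9^{2\tilde{s}}$, the standard inequality $\sup_{\bm{v}} |\bm{v}^\top W_n^{(T)} \bm{v}| \leq 2 \max_{\bm{v} \in \mathcal{N}_T} |\bm{v}^\top W_n^{(T)} \bm{v}|$ reduces the problem to a finite collection of at most $\binom{p}{2\tilde{s}} \cdot 9^{2\tilde{s}} \leq (9p)^{2\tilde{s}}$ fixed directions. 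To each such direction I would apply Lemma \ref{empirical risk convex} — equivalently the Hanson--Wright bound of Lemma \ref{Hanson-Wright} together with the variance estimate $\|Q_n\|_2 \leq \sum_{l=0}^n \rho(l)\phi_{\max}$ — at a level equal to a fixed fraction of $\tilde{\zeta}_n$, chosen so that the doubling loss from the net is compensated.

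A union bound over these points then produces an estimate of the shape $2(9p)^{2\tilde{s}} \exp\big(-cn \min\{\xi, \xi^2\}\big)$ with $\xi$ proportional to $\tilde{\zeta}_n$. Here the hypothesis $\tilde{\zeta}_n^2 < \tilde{\zeta}_n$ forces $\tilde{\zeta}_n < 1$, so $\min\{\xi, \xi^2\} = \xi^2$ is of order $\tilde{\zeta}_n^2 = (b + 4\tilde{s})\log p / (cn)$, and the per-point tail is of order $p^{-(b + 4\tilde{s})}$. The coefficient $4\tilde{s}$ in the definition of $\tilde{\zeta}_n$ is calibrated exactly so that, after multiplying by the point count $(9p)^{2\tilde{s}}$, whose logarithm $2\tilde{s}(\log 9 + \log p)$ is of order $\tilde{s}\log p$, and absorbing the factor $2$ and the universal constant from Lemma \ref{Hanson-Wright} into $c$, the surviving exponent is $-b\log p$; the mild requirement $p > 6$ provides the slack needed for this absorption.

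The step I expect to be the main obstacle is exactly this passage from pointwise to uniform control: the combination of the combinatorial union over the $\binom{p}{2\tilde{s}}$ supports with the $\epsilon$-net discretisation of each low-dimensional sphere. The delicate point is the bookkeeping that confirms the $4\tilde{s}$ appearing in $\tilde{\zeta}_n$ dominates the joint cost of the supports and the nets once the covering loss (the factor $2$), the behaviour of $\min\{\xi, \xi^2\}$ under $\tilde{\zeta}_n < 1$, and the reconciliation of the constant $c$ of Lemma \ref{Hanson-Wright} with the constant $c$ in the definition of $\tilde{\zeta}_n$ are all taken into account; this is precisely where the conditions $p > 6$ and $\tilde{\zeta}_n^2 < \tilde{\zeta}_n$ enter.
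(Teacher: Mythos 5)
Your proposal follows essentially the same route as the paper's proof: reduce the supremum over $2\tilde{s}$-sparse unit vectors to a finite net (your support stratification is exactly how the paper obtains its covering number $\binom{p}{2\tilde{s}}6^{2\tilde{s}}$), apply the Hanson--Wright inequality with the variance bound $\|Q_n\|_2 \leq \sum_{l=0}^n \rho(l)\phi_{\max}$ at each net point, and union-bound, with $\tilde{\zeta}_n^2 < \tilde{\zeta}_n$ and $p>6$ entering precisely as you describe. If anything, you are more careful than the paper on one step: the paper union-bounds over a $1/2$-net without inserting the supremum-to-net comparison factor, whereas your $1/4$-net with the factor $2$ (paid for by absorbing constants into $c$) makes that discretization step explicit.
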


\begin{proof}[Proof of Lemma \ref{maximal ineq l0}]
First, we fix $\bm{v} \in \mathcal{B}_0(2 \tilde{s})\cap \mathbb{S}^{p-1}$ arbitrarily.
In view of the proof of Lemma \ref{empirical risk convex}, we have that 
\[
|\bm{v}^\top W_n \bm{v}|
= \frac{1}{n}\left|
\|\bm{X}_{(n)}^\top \bm{v}\|_2^2
- E[\|\bm{X}_{(n)}^\top \bm{v}\|_2^2]
\right|.
\]
It follows from Lemma \ref{Hanson-Wright}
that there exists a constant $c>0$ such that
for every $\zeta>0$,
\[
P\left(
\frac{1}{n}\left|
\|\bm{X}_{(n)}^\top \bm{v}\|_2^2
- E[\|\bm{X}_{(n)}^\top \bm{v}\|_2^2]
\right|>\zeta \|Q_n\|_2
\right)
\leq 2 \exp(-cn \min\{\zeta, \zeta^2\}),
\]
where $Q_n$ is the covariance matrix of 
$\bm{X}_{(n)}^\top \bm{v}$.
Noting that 
\[
\|Q_n\|_2 \leq \sum_{l=0}^n \rho(l) \phi_{\max},
\]
we have 
\[
P\left(
\frac{1}{n}\left|
\|\bm{X}_{(n)}^\top \bm{v}\|_2^2
- E[\|\bm{X}_{(n)}^\top \bm{v}\|_2^2]
\right|>\zeta \sum_{l=0}^n \rho(l) \phi_{\max}
\right)
\leq 2 \exp(-cn \min\{\zeta, \zeta^2\}).
\]
Then, we take a union bound over 
$\mathcal{B}_0(2 \tilde{s})\cap\mathbb{S}^{p-1}$.
It is easy to see that we need 
\[
\left(\begin{array}{c}
p \\
2 \tilde{s}\\
\end{array}
\right) 6^{2\tilde{s}}
\]
points to cover the set $\mathcal{B}_0(2\tilde{s})\cap\mathbb{S}^{p-1}$ by balls with radius 
$1/2$.
See $e.g.$, Chapter 4 of \cite{vershynin2018high}.
We wright $\mathcal{N}_{1/2}$ for the 
set of centers of $1/2$-balls which covers 
$\mathcal{B}_0(2 \tilde{s})\cap\mathbb{S}^{p-1}$.
Then, it holds that 
\begin{eqnarray*}
\lefteqn{
P\left(\sup_{v \in \mathcal{B}_0(2 \tilde{s})\cap \mathbb{S}^{p-1}}
\frac{1}{n}\left|
\|\bm{X}_{(n)}^\top \bm{v}\|_2^2
- E[\|\bm{X}_{(n)}^\top \bm{v}\|_2^2]
\right|>\zeta \sum_{l=0}^n \rho(l) \phi_{\max}
\right)}\\
&\leq& 
\sum_{v \in \mathcal{N}_{1/2}}
P\left(
\frac{1}{n}\left|
\|\bm{X}_{(n)}^\top \bm{v}\|_2^2
- E[\|\bm{X}_{(n)}^\top \bm{v}\|_2^2]
\right|>\zeta \sum_{l=0}^n \rho(l) \phi_{\max}
\right) \\
&\leq& \left(\begin{array}{c}
p \\
2\tilde{s}\\
\end{array}
\right) 6^{2 \tilde{s}} 2 \exp(-cn \min\{\zeta, \zeta^2\})\\
&\leq& (6p)^{2 \tilde{s} }2 \exp(-cn \min\{\zeta, \zeta^2\})\\
&=& 2\exp\left(
2 \tilde{s} \log p + 2 \tilde{s} \log 6 - cn \min\{\zeta, \zeta^2\}
\right)\\
&\leq& 2\exp\left(
4 \tilde{s} \log p - cn \min\{\zeta, \zeta^2\}
\right) 
\end{eqnarray*}
If we replace $\zeta$ with $\tilde{\zeta}_n$, then we obtain the conclusion.
\end{proof}
\begin{proof}[Proof of Theorem \ref{oracle l0}]
It suffices to show the inequality
\[
\|\hat{\bm{\beta}}^0_n - \bm{\beta}^0\|_2
\leq \delta_n
\]
on the event
\[
\left\{\sup_{v \in \mathcal{B}_0(2 \tilde{s}) \cap \mathbb{S}^{p-1}} |\bm{v}^\top W_n \bm{v}| \leq \tilde{\gamma}_n\right\}.
\]
Note that 
$\hat{\bm{\beta}}^0_n- \bm{\beta}^0 \in \mathcal{B}_0(2 \tilde{s})$.
Along the same lines as the proof of Theorem \ref{oracle ineq Lasso}, 
we have the following analog to \eqref{eq:7.10ll}, \eqref{eq:7.7ll} and \eqref{eq:7.9ll} that
\begin{align*}
(\sigma-3\eta) \|\hat{\bm{\beta}}^0_n - \bm{\beta}^0\|_2^2 + \lambda_0 \|\hat{\bm{\beta}}^0_n\|_0
&\leq
R(\hat{\bm{\beta}}^0_n)-R(\bm{\beta}^0) + \lambda_0 \|\hat{\bm{\beta}}^0_n\|_0 \notag\\
&\leq
\bigl(\dot{R}_n(\hat{\bm{\beta}}^0_n)-\dot{R}(\hat{\bm{\beta}}^0_n) \bigr)^\top (\bm{\beta}^0 - \hat{\bm{\beta}}^0_n) + \lambda_0 \|\bm{\beta}^0\|_0 \notag\\
&\leq
\tilde{\gamma}_n \|\hat{\bm{\beta}}^0_n - \bm{\beta}^0\|_2^2+
\bm{\beta}^{0\top} W_n (\hat{\bm{\beta}}^0_n - \bm{\beta}^0) + \lambda_0 \|\bm{\beta}^0\|_0, 
\end{align*}
and thus, it holds that
\begin{align*}
(\sigma-3\eta - \tilde{\gamma}_n)\|\hat{\bm{\beta}}^0_n - \bm{\beta}^0\|_2^2
&\leq 
|\bm{\beta}^{0 \top}W_n(\hat{\bm{\beta}}_n^0 -\bm{\beta}^0)|
+ \lambda_0 \|\bm{\beta}^0\|_0
- \lambda_0 \|\hat{\bm{\beta}}_n^0\|_0 \\
&\leq \|W_n^{1/2}\bm{\beta}^0\|_2
\|W_n^{1/2} (\hat{\bm{\beta}}_n^0 -\bm{\beta}^0)\|_2
+ \lambda_0 \tilde{s} \\
&\leq \tilde{\gamma}_n\|\bm{\beta}^0\|_2
\|\hat{\bm{\beta}}_n^0 -\bm{\beta}^0\|_2
+ \lambda_0 \tilde{s}.
\end{align*}
Noting that $\|\bm{\beta}^0\|_2=\phi_{\max}$, we reach the conclusion 
after solving the quadratic inequality 
with respect to $\|\hat{\bm{\beta}}^0_n-\bm{\beta}^0\|_2$.
\end{proof}

\section*{Acknowledgements.}
The last two authors  would like to express their thanks to the Institute for Mathematical Science (IMS) and Research Institute for Science \& Engineering, Waseda University, respectively, for their support.

K. Fujimori is supported by JSPS Grant-in-Aid for Early-Career Scientists 21K13271.
Y. Liu is supported by JSPS Grant-in-Aid for Scientific Research (C) 20K11719.
M. Taniguchi is supported by JSPS Grant-in-Aid for Scientific Research (S) 18H05290.



\bibliographystyle{econ}
\bibliography{spca-ref}

\end{document}